\theoremstyle{plain}
\newtheorem{theorem}{Theorem}[section]
\newtheorem*{theorem*}{Theorem}
\newtheorem{proposition}[theorem]{Proposition}
\newtheorem{corollary}[theorem]{Corollary}
\newtheorem{lemma}[theorem]{Lemma}
\theoremstyle{definition}
\newtheorem{definition}[theorem]{Definition}
\newtheorem{remark}[theorem]{Remark}
\newcommand{\enm}[1]{\ensuremath{#1}}     %
\newcommand{\cal}[1]{\mathcal{#1}}
\newcommand{\CC}{\enm{\mathbb{C}}}
\newcommand{\NN}{\enm{\mathbb{N}}}
\newcommand{\RR}{\enm{\mathbb{R}}}
\newcommand{\ZZ}{\enm{\mathbb{Z}}}
\newcommand{\PP}{\enm{\mathbb{P}}}
\newcommand{\Mm}{\enm{\cal{M}}}
\newcommand{\Hh}{\enm{\cal{H}}}
\newcommand{\Ii}{\enm{\cal{I}}}
\newcommand{\Oo}{\enm{\cal{O}}}
\renewcommand{\phi}{\varphi}
\newcommand{\ce}{\mathrel{\mathop:}=}
\author{E. Ballico, E. Gasparim, {\tiny and} B. Suzuki}
\title{Hyperelliptically fibred surfaces with nodes} 
\begin{document}
\maketitle

\begin{abstract} 
Using elementary methods of algebraic geometry, we present constructions of hyperelliptically fibred surfaces containing nodal fibres. 
\end{abstract}

\tableofcontents

\maketitle

\section{\bf Motivation}
Hyperelliptically
fibred surfaces appear often in string theory and when they contain singular fibres these provoke the existence 
of $D$-branes. Elliptic fibrations are quite popular and well understood, while the precise role hyperelliptic fibrations
might play in string theory remains to be understood. 
Recall that a curve $C$ of genus $\geq 2$ is said to be hyperelliptic if there exists a morphism $C \rightarrow \mathbb P^1$ of degree 2.
Examples of string theoretical work considering hyperelliptic fibrations are \cite{mmp,xy}. 
We were asked by physicists to provide a summary of results on the existence of hyperelliptic fibrations with certain 
types of singularities, 
and this was the motivation to produce this note. 
Most results here were collected from the standard algebraic geometric literature, 
however we do provide original constructions in sections \ref{example-eisen}, \ref{example-node}, and \ref{toric-families}
showing explicit examples of genus 2 families acquiring nodes. Nodes may occur in any of the following ways, as illustrated in Figure \ref{examples}:
 irreducible of genus 2 with a single node, irreducible of genus 2 with more than 1 nodes, reducible.
 
\begin{figure}[h]
\includegraphics[scale=0.12]{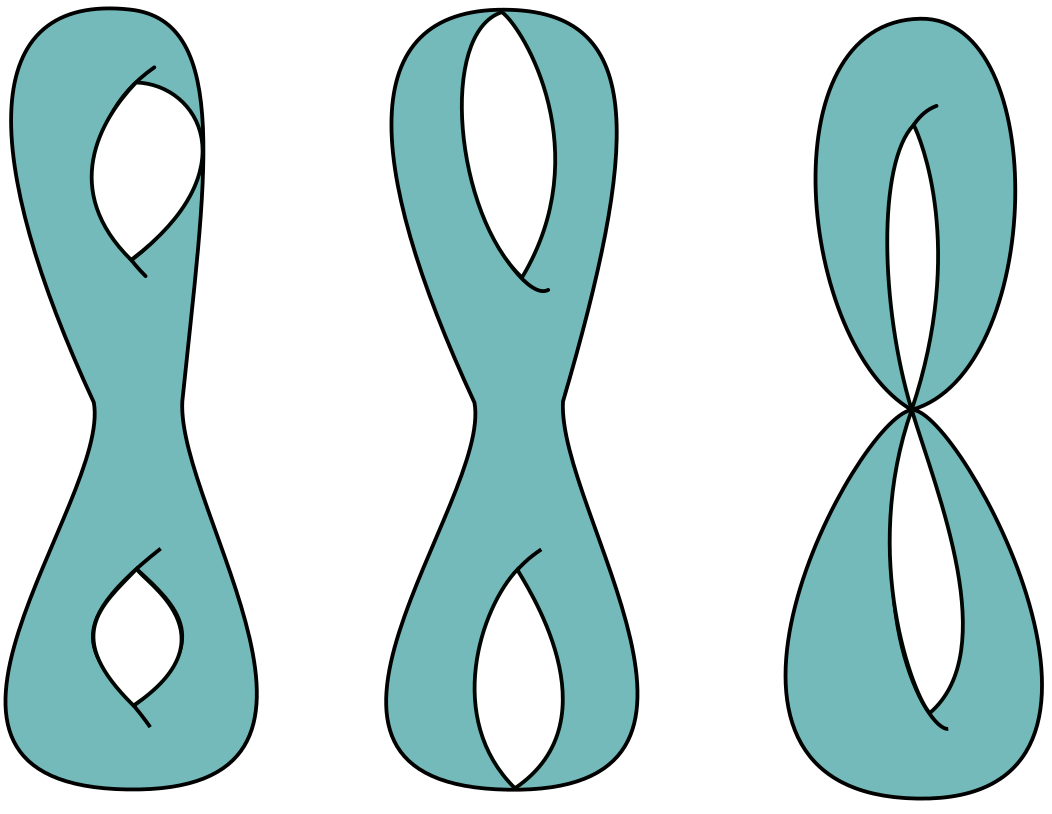}
\caption{Types of nodes on genus 2 curves}
\label{examples}
\end{figure}

In the review parts of sections \ref{sing-fibres} and \ref{smooth-fibres} our goal was to collect properties of numerical invariants of fibred surfaces, with focus on inequalities 
involving bounds on Chern numbers. Physics properties dictate that positivity of Chern numbers will imply the existence of D-branes, 
but this theme will remain for another paper dedicated to applications to F-theory.

Reversing a construction of Eisenbud, in subsection \ref{example-eisen} 
we find a nice family of all genus $g$ hyperelliptic curves together with also nodal ones
in the same parameter space, for example such a family includes the irreducible curve with a prescribed $\le g$ number of nodes and the union of an elliptic curve and a $g-1$ hyperelliptic curve.

Given that the motivation coming from string theory emphasises applications of genus $2$ curves, 
in subsection \ref{example-node} we present a second construction specifically tailored for the case of genus $2$. 
 Our second construction produces a large family 
 such that a general element is a smooth curve of genus $2$,
 it contains a $2$ parameter family
 of integral nodal curves with arithmetic genus $2$ and exactly one node, 
while it also contains a 1 parameter family of curves which are a nodal union of $2$ elliptic curves with a unique common point.
 In subsection \ref{toric-families}, we present some further details of the toric case. We observe also that an entire chapter about hyperelliptic curves will appear in the upcoming book by Eisenbud and Harris \cite{EH}.
 
 In section \ref{smooth-fibres} we summarise some properties of  surfaces fibred by smooth fibres. The most interesting case 
 being that of the very interesting Kodaira surfaces which are {\it not} locally trivial. Therefore a Kodaira 
 surface is not a fibre bundle, hence not all fibres being isomorphic. Furthermore, each fibre
 of a Kodaira surface is isomorphic to only finitely many fibres. Such wild local behaviour made it almost impossible 
 for us to illustrate section \ref{smooth-fibres} with figures in the same way as we did in section \ref{sing-fibres}.
A reader with better suggestions of how to draw a Kodaira surface is welcome to share their idea with us.

\section{\bf Fibrations containing singular fibres}\label{sing-fibres}

Let $X$ be a smooth compact complex surface.
The {\bf holomorphic Euler characteristic} of $X$ is $$\chi (\Oo_X)\ce 1-h^1(\Oo_X) +p_g(X) = h^0(\Oo_X) -h^1(\Oo_X) +h^2(\Oo_X),$$ 
while the {\bf topological Euler characteristic} of $X$ is
$$e(X)= h^0(TX) -h^1(TX) +h^2(TX)-h^3(TX) +h^4(TX) $$ for any smooth compact manifold of real dimension 4.
 
For a smooth compact complex surface $X$ (i.e. $\dim_{\mathbb C}X=2$), Noether's formula gives 
 $$\boxed{12\chi(\Oo_X) =c_1^2(X)+c_2(X)= (K\cdot K) +e(X)}$$ 
 where $K$ is the canonical divisor class \cite[Eq.(4)p.26]{bhpv}\cite[p.472]{gh}.
 In contrast, we also observe that if $C$ is a smooth compact curve (i.e. a Riemann surface) and
 hence the two numbers $h^0(\Omega ^1_C)$ and $h^1(\Oo_C)$ are the same, 
 we have that the topological and holomorphic Euler characteristics satisfy
 $\boxed{2\chi(\Oo_C)=e(C) }.$ 
 
 \begin{remark}\label{a1}
Let $X'$ be the surface obtained from $X$ blowing up one point. We have 
that the irregularities satisfy $h^1(\Oo_{X'})=h^1(\Oo_X)$,
the plurigenera satisfy $p_g(X') =p_g(X)$ and hence the holomorphic Euler characteristics satisfy $\chi (\Oo_{X'}) =\chi(\Oo _X)$.
We have $c_1^2(X') =c_1^2(X)-1$ and consequently $c_2(X') =c_2(X)+1$, see \cite{bhpv}.
\end{remark}

We recall the fundamental local-triviality theorem of Grauert--Fischer.
\begin{theorem}\label{a2}
\cite[p.\thinspace 36]{bhpv} Let $f\colon X\to Y$ be a smooth holomorphic family of compact complex manifolds. The holomorphic map $f$ is locally trivial (in the Euclidean topology) over $Y$ if and only if all fibres of $f$ are biholomorphic.
\end{theorem}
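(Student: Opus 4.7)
The forward implication is essentially tautological: local triviality means $f^{-1}(U) \cong F_U \times U$ over each sufficiently small $U \subset Y$, so fibres over $U$ are mutually biholomorphic, and a covering-plus-connectedness argument on $Y$ then makes all fibres of $f$ biholomorphic to a common manifold $F$. The substance of the theorem lies in the converse.

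For that direction, my plan is as follows. Fix $y_0 \in Y$, set $F \ce X_{y_0}$, and aim to produce a biholomorphism $F \times U \to f^{-1}(U)$ over a Euclidean neighborhood $U$ of $y_0$. I would first invoke Douady's representability theorem to construct the relative isomorphism space $I \ce \mathrm{Isom}_Y(F \times Y, X)$: a complex analytic space equipped with a holomorphic projection $\pi \colon I \to Y$ whose fibre over $y$ is the (nonempty, by hypothesis) set of biholomorphisms $F \to X_y$. By the Bochner--Montgomery theorem, $\mathrm{Aut}(F)$ is a finite-dimensional complex Lie group, and each fibre of $\pi$ is a torsor under it. A holomorphic local section $s$ of $\pi$ near $y_0$ would immediately yield the desired trivialization via $(x,y) \mapsto s(y)(x)$.

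The hard part will be showing that such a local section exists, i.e., that $\pi$ is smooth at a chosen isomorphism $\phi_0 \colon F \to X_{y_0}$. The obstruction is controlled by the Kodaira--Spencer class $\rho \colon T_{y_0} Y \to H^1(F, T_F)$, and this is precisely where the hypothesis that all fibres are biholomorphic must enter in an essential way. I would appeal to the Kuranishi versal deformation of $F$ and the induced classifying morphism $\kappa$ from a neighborhood of $y_0$ into it: since every fibre is abstractly isomorphic to $F$, the image of $\kappa$ lies in the $\mathrm{Aut}(F)$-orbit of $[F]$ inside the Kuranishi space, so the differential $d\kappa_{y_0}$ lands in the tangent direction of that orbit and hence maps to zero in $H^1(F, T_F)$. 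This forces $\rho \equiv 0$, and standard deformation-theoretic arguments then upgrade the vanishing of $\rho$ to the smoothness of $\pi$ at $\phi_0$, producing the section $s$ and completing the proof.
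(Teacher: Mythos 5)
The paper does not prove this statement at all: it is the Fischer--Grauert theorem, quoted from \cite[p.~36]{bhpv}, so there is no in-paper argument to compare yours against. Judged on its own, your skeleton (forward direction by connectedness of $Y$; converse via the Douady space $\mathrm{Isom}_Y(F\times Y,X)\to Y$, whose fibres are $\mathrm{Aut}(F)$-torsors, so that a local holomorphic section gives the trivialization) is exactly the architecture of the standard modern proof. But the step where you actually produce the section contains two genuine gaps, and they sit precisely at the point where all the difficulty of the theorem is concentrated.

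First, the Kuranishi argument is wrong as stated. The group $\mathrm{Aut}(F)$ acts on the Kuranishi space of $F$ \emph{fixing} the distinguished point $[F]$, so ``the $\mathrm{Aut}(F)$-orbit of $[F]$'' is the single point $[F]$; and the locus of parameters whose fibre is abstractly isomorphic to $F$ is in general \emph{not} an orbit of $\mathrm{Aut}(F)$, because the Kuranishi family is only versal, not universal, whenever $h^0(F,T_F)>0$ (isomorphism of nearby fibres is governed by a local groupoid, not by the global automorphism group). So the inclusion of $\mathrm{Im}(\kappa)$ into an orbit, and hence the vanishing of $d\kappa_{y_0}$, does not follow. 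Second, even if one grants $\rho_{y_0}=0$, this is only first-order information and does not ``upgrade'' to smoothness of $\pi$ at $\phi_0$ or to local triviality: pulling back any non-locally-trivial family over a disc along $t\mapsto t^2$ kills the Kodaira--Spencer class at the origin without making the family locally trivial there. The classical Kodaira--Spencer rigidity theorem needs $\rho_y=0$ for all $y$ in a neighbourhood \emph{together with} constancy of $\dim H^1(X_y,T_{X_y})$, and these dimensions are only upper semicontinuous; controlling their jumping loci is exactly where Fischer and Grauert had to work. What does go through cheaply is generic smoothness of $\pi$ on a component of $I$ dominating $Y$, which yields local sections (hence triviality) over a dense open subset of $Y$; the remaining content of the theorem is the descent to \emph{every} point of $Y$, usually handled by a Noetherian induction/stratification over the bad locus, and your proposal does not address this.
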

Note that in Theorem \ref{a2} we do not assume that $Y$ is compact, we only assume that $f$ is a smooth and proper holomorphic map.
Let $f\colon X\to D$ be a proper holomorphic map with $X$ a smooth and connected complex surface (often called a fibration of curves with $D$ as a base) and $D$ a (not necessarily compact) Riemann surface (even not compact algebraic, e.g. a disc of $\CC$). 
The sheaf $f_\ast (\Oo_X)$ is a locally free $\Oo_D$-sheaf, say of rank $r\ge 1$ with $\Oo_D$ as a direct factor and
$f = h_1\circ f_1$, where $D_1$ is a Riemann surface, $h_1\colon D'\to D$ is a finite holomorphic map of degree $r$, 
$f_1\colon X\to D_1$ is proper and $f_{1\ast}(\Oo _X)=\Oo_{D'}$. The map $h_1$ is the identity if $r=1$. All fibres of $f_1$ are connected, while
a general fibre of $f$ has exactly $r$ connected components. Most books say that $f$ is a fibration only if $f_\ast (\Oo_X)=\Oo_D$, because the general case is reduced to a fibration by taking a finite map of Riemann surfaces \cite[Ch.\thinspace III, \S 8]{bhpv}.
Assume $f_\ast (\Oo_X)=\Oo_D$, i.e. assume that a general fibre is connected (and so all fibres of $f$ are connected). Let $A$ be any smooth fibre of $f$ and $A_s\ce f^{-1}(s)$ any fibre of $f$. 
Then the Euler numbers satisfy 
$e(A_s)\ge e(A)$. If $X$ is compact, i.e. if $D$ is compact, then  by \cite[Prop.\thinspace III.11.4]{bhpv} we have

\begin{equation}\label{equal}
e(X) =e(A)e(D) +\sum _{s\in D} (e(A) -e(A_s)).
\end{equation}
Moreover, there is an easy criterion which gives $e(A_s)>e(A)$ if $A_s$ is singular and not a multiple of a smooth elliptic curve \cite[III.11.5]{bhpv}. Hence if $D$ is compact of genus $g_2$ and a general fibre of $f$ is smooth of genus $g_1$, then 
$$\boxed{e(X)\ge 4(g_1-1)(g_2-1)}$$ 
(recall here that the Euler number equals the top Chern number for compact $X$)
with strict inequality if some of the fibres are singular and either $g_1\ne 1$ or $g_1=1$, but there is at least one fibre with is not a multiple of an elliptic curve \cite[III.\thinspace 11.6]{bhpv}.
See \cite[\S III. 9]{bhpv} for conditions that prevent the existence of multiple fibres, while their for existence see \cite{mi}. 

\begin{remark}
Let $X$ be a smooth and connected complex surface. We are  interested in the case in which there is a surjective 
holomorphic map $f\colon X\to D$ with $D$ a smooth projective curve
and as general fibre of $f$ a smooth curve of genus $g\ge 2$. A surface $X$ which has such a fibration is always a projective surface for the following reasons, all due to Kodaira.
Let $\CC(X)$ be the field of all meromorphic functions on $X$. The field $\CC(X)$ is a finitely generated extension of the field $\CC$ and it has transcendental degree at most $2$, see \cite[Thm.\thinspace I.7.1]{bhpv}.
The transcendence degree $a(X)$ of the field $\CC(X)$ over $\CC$ is called the algebraic dimension of $X$. If $a(X)=0$, then $X$ has only finitely many curves and, in particular, there is no surjective map $f\colon X\to D$ with $D$ a curve \cite[Thm.\thinspace IV.8.2]{bhpv}, \cite[Thm.\thinspace 5.1]{kod1}. 
Any surface with $a(X)=1$ admits a fibration $u\colon X\to D$ with $D$ a smooth curve and such that 
a general fibre of $u$ is an elliptic curve, see \cite[VI.5.1]{bhpv},\cite[Thm.\thinspace 4.1]{kod1}, and any irreducible curve $T\subset X$
 is contained in a fibre of $u$ \cite[Thm.\thinspace 4.3]{kod1}.
Surfaces with $a(X) =2$ are algebraic and projective,  see\cite[Cor.\thinspace IV.6.5]{bhpv}, \cite[Thm.\thinspace 3.1]{kod1}.
\end{remark}


\subsection{Elliptic fibrations and elliptic surfaces}
Let $D$ be a compact complex curve of genus $g_2\ge 0$ and $f\colon X\to D$ be a proper holomorphic map with $X$ a compact complex surface 
and such that a general fibre of $f$ is an elliptic curve.

 Let $S$ be the set of all $s\in D$ such that $f^{-1}(s)$ is singular (it may even be a multiple fibre). For any smooth genus $1$ curve, let $j(E)\in \mathbb C\setminus \{0\}$ be its $j$-invariant. Recall that the $j$-invariant of an elliptic curve 
 $$y^2= x^3+vex+g$$ is 
 $$j(\tau) = \frac{4(24f)^3}{\Delta}, \quad \Delta = 4f^3 +27g^2,$$
 and two smooth elliptic curves are isomorphic if and only if they have the same $j$-invariant. 
 
For each $s\in D$ set $E_s\ce f^{-1}(s)$ and note that it is connected
\cite[p.\thinspace 200--216]{bhpv}. We will analyse two cases separately, depending on whether there 
are singular fibres or not.
\subsubsection{ Case $S=\emptyset$ }

In the case where there are no singular fibres (as illustrated in Figure \ref{nosing}), i.e. assuming that $f$ is a submersion, by (\ref{equal}) we have $e(X)=0$. No way to change that. 

\begin{figure}[h]
\includegraphics[scale=0.18]{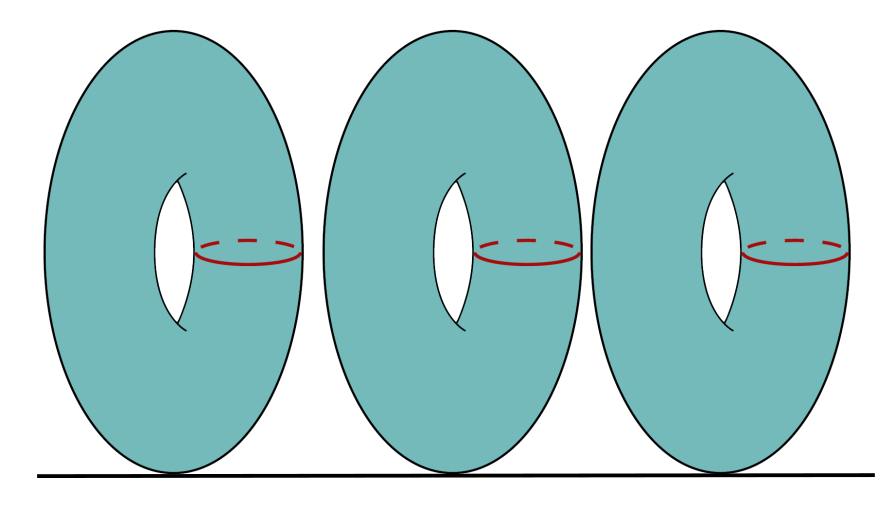}
\caption{Family without singular fibres}
\label{nosing}
\end{figure}

Next, we will use the following result.
\begin{theorem}\label{a3}
\cite[III.\thinspace 15.4]{bhpv} If $X$ is compact, i.e. if $D$ is compact, $f$ has no singular fibres and either $D\cong \mathbb P^1$ or $D$ is an elliptic curve, then $f$ is locally trivial over the base and in particular all fibres of $f$ are biholomorphic.
\end{theorem}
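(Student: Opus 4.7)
The plan is to reduce everything to the Grauert--Fischer Theorem \ref{a2}, which already asserts that a smooth proper holomorphic family is locally trivial if and only if all fibres are biholomorphic. So my task reduces to showing that any two fibres $E_s, E_t$ of $f$ are biholomorphic. Since each $E_s$ is a smooth elliptic curve, this is in turn equivalent to showing that the assignment $s \mapsto j(E_s)$ is constant on $D$, where $j$ denotes the classical $j$-invariant introduced earlier in this section.

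The first step is to construct a globally well-defined holomorphic function $j \colon D \to \mathbb{C}$. Around any $s_0 \in D$ I would choose a local holomorphic section of $f$ (Ehresmann plus the smoothness of $f$ guarantees a smooth section, which can be upgraded to a holomorphic one after possibly shrinking $U$), and then present the nearby fibres in a local Weierstrass form $y^2 = 4x^3 - g_2(s) x - g_3(s)$ with $g_2, g_3$ holomorphic on a neighbourhood $U$ of $s_0$. Since by assumption no fibre of $f$ is singular, the discriminant $\Delta = g_2^3 - 27 g_3^2$ is nowhere zero on $U$, so $j = 1728\,g_2^3/\Delta$ is holomorphic on $U$. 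Because the value of $j$ depends only on the isomorphism class of the fibre, these local functions agree on overlaps and patch together to a global holomorphic function $j \colon D \to \mathbb{C}$.

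The second step is then immediate: $D$ is a compact Riemann surface (either $\mathbb{P}^1$ or an elliptic curve), and $\mathbb{C}$ admits no non-constant holomorphic function from a compact Riemann surface by the maximum modulus principle. Hence $j$ is constant on $D$, so every pair of fibres shares the same $j$-invariant and is therefore biholomorphic, and Grauert--Fischer \ref{a2} concludes local triviality.

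I expect the only delicate step to be the global patching in step one, since constructing local Weierstrass models requires a holomorphic section and verifying that $j$ really only depends on the isomorphism class of the fibre requires invoking the transformation law for Weierstrass data. A more intrinsic alternative which avoids picking sections altogether is to trivialise $R^1 f_* \mathbb{Z}$ locally, use the Hodge line bundle $f_* \Omega^1_{X/D}$ to form a local period map $\tau \colon U \to \mathbb{H}$, and compose with the classical modular function $j \colon \mathbb{H} \to \mathbb{C}$; the $SL_2(\mathbb{Z})$-invariance of $j$ absorbs the ambiguity in the choice of integral basis and recovers the same global function $j \colon D \to \mathbb{C}$.
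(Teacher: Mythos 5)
The paper offers no proof of Theorem \ref{a3}; it is quoted from \cite{bhpv}, so there is no in-text argument to compare yours against line by line. Your proof is correct and is the standard one: reduce via Grauert--Fischer (Theorem \ref{a2}) to showing that all fibres are biholomorphic, observe that the functional invariant $s\mapsto j(E_s)$ is a globally defined holomorphic function on $D$, and conclude that it is constant because $D$ is compact. Two remarks. First, the phrase ``Ehresmann \ldots guarantees a smooth section, which can be upgraded to a holomorphic one'' is not a legitimate step as written; what you actually need (and what is true) is that a holomorphic submersion admits local holomorphic sections through any prescribed point, directly from the implicit function theorem. Your intrinsic alternative via $R^1f_\ast\mathbb{Z}$ and the Hodge bundle is cleaner, since it avoids both the choice of section and the patching verification, so it deserves to be the main argument rather than an afterthought. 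Second, note that you never use the hypothesis that $D$ is rational or elliptic, only that it is compact. This is not a flaw: for fibre genus one the conclusion does hold over a compact base of arbitrary genus, precisely because the coarse moduli space of genus-one curves is the affine $j$-line, so every holomorphic map from a compact curve into it is constant. The restriction on the base genus becomes essential only for fibre genus at least $3$, where $\mathcal{M}_g$ contains complete curves and Kodaira fibrations (Section \ref{kod-surf}) give genuine counterexamples to local triviality; it is worth keeping this contrast in mind, since it explains exactly which hypothesis is doing the work in your argument.
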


\begin{remark}\label{a4}
 Assuming $g_2\le 1$ for the base of an elliptic fibration, then by Theorem \ref{a3} we obtain that all fibres of $f$ are isomorphic to the same elliptic curve, $E$, and there is a finite open covering $\{U_i\}$ of $D$ such that $f^{-1}(U_i)\cong U_i\times E$ for all $i$. 
 \end{remark}
 
Kodaira gave a classification of the possible fibres occurring over $S$ \cite[Ch.\thinspace 7]{Fr}.
\subsubsection{Case $S\ne \emptyset$} 

For a description of all possible singular fibres (these are of 8 types) see \cite[Thm.\thinspace 6.2]{ko}. One case is illustrated in Figure \ref{sing01}.

\begin{figure}[h]
\includegraphics[scale=0.18]{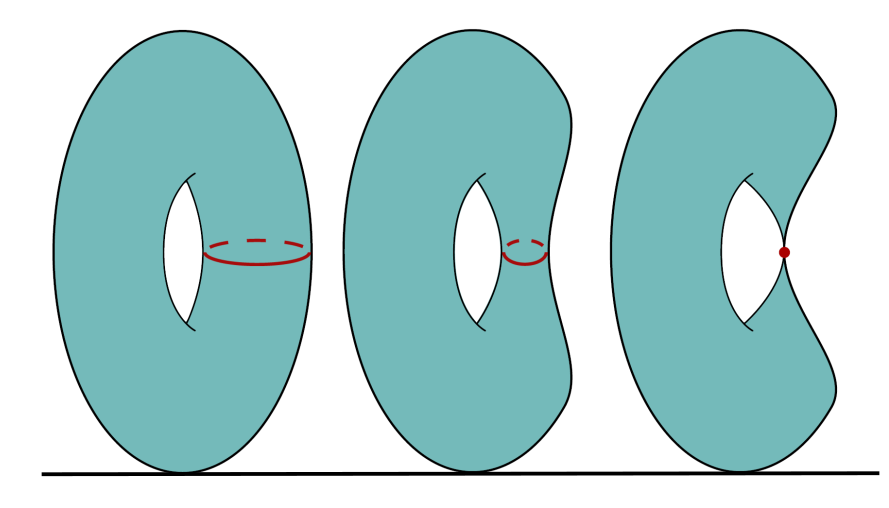}
\caption{Elliptic family with 1 node}
\label{sing01}
\end{figure}

\begin{definition}\label{min}
A fibration $f\colon X\to D$ is said to be {\bf minimal} (or relatively minimal) if no fibre of $f$ contains a $(-1)$-curve. 
\end{definition}

 We assume that the elliptic fibration $f\colon X\to D$ is   relatively minimal. The general case is obtained from the relatively minimal one, by making a finite sequence of blowing ups of points. For a relatively minimal elliptic surface $X$ we have $K_X^2=0$ 
 and hence the second Chern class becomes expressed in terms of the Euler characteristics and the degree $d$ of the dual of the line bundle of $R^1f_\ast(\mathcal O_X)$ \cite[Corollary 16]{Fr}:
$$\boxed{c_2(X) =12\chi(\mathcal O_X) = 12d \quad \textnormal{for an elliptic fibration}.}$$
Furthermore, $d\ge 0$ and $d=0$ if and only if the only singular fibres are multiple fibres whose reduction is smooth.

Therefore, we have noticed here that the presence of singular fibres in the case of elliptic fibrations immediately 
imply that the second Chern number is positive. 


\subsection{Fibrations by curves of genus $2$}\label{Sf2}

In this section we take a surface $X$ fibred by curves of genus  $g_1=2$ over a curve of genus $g_2$, and use the results of Xiao \cite{xiao1}. 
Recall that $X$ is projective for free. 
We set $q(X)\ce h^1(\Oo_X)=h^0(\Omega _X^1)$, and $p_g(X) \ce h^0(\omega _X) =h^2(\Oo_X)$. 
Since $\chi(\Oo _X) \ce1-q(X)+p_g(X)$,
we have 
$$\boxed{\chi(\Oo_X)\ge g_2-1 \quad \text{for}\quad g_1=2}$$
and {\it equality holds if and only $f$ has no singular fibre and all fibres are isomorphic}, see \cite{bea} or \cite[p.\thinspace 7]{xiao1}.

Recall that $g_2 \le q\le g_2+2$ and that $X$ is known if $q=g_2+2$.
 Recall that $E:= f_{\ast}(\omega _{X/D})$ is a rank $2$ vector bundle on $D$ with nonnegative degree and with all rank $1$ 
 quotients of nonnegative degree. Let $E_1$ a rank $1$ subsheaf of $E$ with maximal degree. The maximality of the integer $\deg(E_1)$ means that the coherent sheaf $E/E_1$ has no torsion. Since $D$ is a smooth curve and $E/E_1$ is a rank $1$ torsion free sheaf, $E/E_1$ is a line bundle. Set $\epsilon:= \deg(E_1)-\deg(E/E_1)$. The vector bundle $E$ is said to be unstable (resp. properly semistable, resp. stable) if and only if $\epsilon >0$ (resp. $\epsilon =0$ (resp. $\epsilon <0$) (in \cite[Ex. V.2.8]{h}) the integer $e$ is our integer $-\epsilon$). Since $\deg (E) =\deg(E_1) +\deg (E/E_1)$
 and $\epsilon = \deg(E_1)-\deg(E_2)$, we have $\epsilon \equiv \deg (E)\pmod{2}$. If $\epsilon >0$, i.e. if $E$ is stable, then the maximal degree rank $1$ subsheaf $E_1$ of $E$ is unique, because it corresponds to a unique section of the associated ruled surface over $D$ with negative self-intersection \cite[Prop.\thinspace V.2.21]{h}.
 The $\PP^1$-bundle $P$ is associated to a rank $2$ vector bundle $F$ on $D$ 
which is strongly related to $E$. If $\epsilon >0$ this is the Hirzebruch surface $F_\epsilon$ and the uniqueness of $E_1$ corresponds to the fact that the ruled surface $F_\epsilon \to \PP^1$, $\epsilon>0$, has a unique section $D_0$ with negative self-intersection, i.e. with $D_0^2=-\epsilon$ \cite[Th. V.2.17]{h}.

The following result is a summary of \cite[p.\thinspace 16]{xiao1}, see eq. (9), Thm.\thinspace 2.1 and the Rmq. that follows it.

\begin{theorem}\label{f21}
We have $\epsilon \le p_g+1$, $\epsilon 
\equiv \chi+g_2-1\pmod{2}$, $-g_2 \le \epsilon \le \chi-g_2+1$. If $q>g_2$, then $\epsilon =\chi-g_2+1$; $q=g_2+1$ if and only if $\epsilon =p_g+1-2g_2$. 
\end{theorem}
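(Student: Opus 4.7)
The plan is to translate every claim into a statement about the rank-two vector bundle $E = f_*\omega_{X/D}$ on $D$ and its maximal-degree line subsheaf $E_1$. The key preliminary is to compute $\deg E$: the Leray spectral sequence applied to $\Oo_X$ gives $\chi(\Oo_X) = \chi(\Oo_D) - \chi(R^1 f_*\Oo_X)$, and relative Grothendieck duality identifies $R^1 f_*\Oo_X$ (of generic rank $g_1=2$) with $E^\vee$ up to torsion, so Riemann-Roch on $D$ yields
\[
\deg E = \chi(\Oo_X) - g_2 + 1.
\]
Since $\deg E = \deg E_1 + \deg(E/E_1)$ while $\epsilon = \deg E_1 - \deg(E/E_1)$, the congruence $\epsilon \equiv \deg E \equiv \chi+g_2-1 \pmod{2}$ is immediate, and the upper bound $\epsilon \le \chi-g_2+1$ is exactly the statement $\deg(E/E_1) \ge 0$, which is built into the hypothesis on $E$. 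The lower bound $-g_2 \le \epsilon$ is a reformulation of Nagata's theorem on the Segre invariant $s_1(E) = -\epsilon$ of a rank-two bundle on a curve of genus $g_2$; equivalently, the ruled surface $\PP(E)\to D$ carries no section of self-intersection below $-g_2$.

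The bound $\epsilon \le p_g + 1$ is where I expect the main obstacle. Using $f_*\omega_X = E\otimes \omega_D$ together with Leray, one has $p_g = h^0(\omega_X) = h^0(E\otimes\omega_D)$, so the inclusion $E_1 \hookrightarrow E$ twisted by $\omega_D$ produces $h^0(E_1\otimes\omega_D) \le p_g$. When $\deg E_1 \ge 1$, Serre duality gives $h^1(E_1\otimes\omega_D) = h^0(E_1^{-1}) = 0$, whence Riemann-Roch yields $h^0(E_1\otimes\omega_D) = \deg E_1 + g_2 - 1$; substituting $\deg E_1 = (\deg E + \epsilon)/2$ and the formula for $\deg E$ then converts the resulting inequality into $\epsilon \le p_g + 1$. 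The residual cases $\deg E_1 \le 0$ render $\epsilon$ very negative and are disposed of directly, but making the case analysis uniform and extracting the precise constant on the right-hand side is the delicate point.

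Finally, the statements involving $q$ come from the exact sequence $0 \to f^*\Omega^1_D \to \Omega^1_X \to \omega_{X/D} \to 0$. Taking global sections, and using $h^0(f^*\Omega^1_D) = h^0(\Omega^1_D) = g_2$ by Leray (since $f_*\Oo_X = \Oo_D$), identifies $q - g_2$ with the image of $H^0(\omega_{X/D}) = H^0(E)$ in $H^1(f^*\Omega^1_D)$. In particular $q > g_2$ forces $E$ to admit a nonzero section; maximality of $\deg E_1$ makes that section factor through $E_1$, and combined with $\deg(E/E_1)\ge 0$ it pins down $\deg(E/E_1)=0$, yielding $\epsilon = \deg E = \chi-g_2+1$. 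The equivalence $q = g_2+1 \iff \epsilon = p_g+1-2g_2$ then follows by substituting $\chi = 1-q+p_g$ into $\epsilon = \chi-g_2+1$: this identifies exactly $q=g_2+1$, while the converse requires ruling out the boundary case $q=g_2+2$ (which is done using the structural description of $X$ in that case alluded to just before the theorem).
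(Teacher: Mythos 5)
First, a point of comparison: the paper does not prove Theorem \ref{f21} at all --- it is explicitly presented as a summary of results quoted from Xiao \cite{xiao1} (eq.\ (9), Thm.\ 2.1 and the remark following it). So your proposal can only be judged on its own merits. The overall framework is the right one and is essentially Xiao's: $\deg E=\chi-g_2+1$ via Leray and relative duality, the parity and the bound $\epsilon\le\deg E$ from $\deg(E/E_1)\ge 0$, Nagata's theorem for $\epsilon\ge -g_2$, and $h^0(E_1\otimes\omega_D)\le h^0(E\otimes\omega_D)=p_g$ for $\epsilon\le p_g+1$. The case analysis you flag as delicate in that last bound is in fact harmless: if $\deg E_1\le 0$ then $\epsilon\le\deg E_1\le 0\le p_g+1$ outright, and if $\deg E_1\ge 1$ your Riemann--Roch computation gives $\deg E_1\le p_g+1-g_2$, hence $\epsilon\le\deg E_1\le p_g+1$.

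The genuine gap is in the step ``$q>g_2\Rightarrow\epsilon=\chi-g_2+1$''. The Leray spectral sequence gives $q-g_2=h^0(R^1f_*\Oo_X)=h^0(E^\vee)$, i.e.\ $q>g_2$ produces a nonzero map $E\to\Oo_D$, not a nonzero section of $E$. That distinction is exactly what makes the argument work: the image of $E\to\Oo_D$ is a rank~$1$ quotient of degree $\le 0$, hence of degree $0$ by the hypothesis that all rank~$1$ quotients of $E$ have nonnegative degree; saturating its kernel gives a line subbundle of degree $\ge\deg E$, so $\deg E_1=\deg E$ and $\epsilon=\deg E$. Your version extracts ``a nonzero section of $E$'' (which is what the sequence $0\to f^*\Omega^1_D\to\Omega^1_X\to\Omega^1_{X/D}\to 0$ naturally hands you) and claims it pins down $\deg(E/E_1)=0$. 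It does not: a section of $E$ only shows $\deg E_1\ge 0$, and a nonsplit extension $0\to\Oo_D\to E\to L\to 0$ with $\deg L\gg 0$ has a section while $\deg(E/E_1)$ can be strictly positive. You need the $E^\vee$ side of the duality (or Fujita's splitting $E\cong\Oo_D^{\oplus(q-g_2)}\oplus\mathcal{A}$). A smaller slip: in the converse of the final equivalence, the case requiring separate treatment is $q=g_2$, which is excluded by the parity $\epsilon\equiv\chi+g_2-1\pmod 2$; the case $q=g_2+2$ is already killed by the $q>g_2$ clause, since it forces $\epsilon=p_g-2g_2\ne p_g+1-2g_2$.
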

With this notation, $\epsilon >0$ is equivalent to the instability of the vector bundle $F$ and in this case there is a unique section $D_0$ of the $\PP^1$-bundle with negative self-intersection. There is also an effective divisor $R\subset P$ \cite[p.\thinspace 12]{xiao1}.

\begin{theorem}\label{f22}\cite[Thm.\thinspace 2.2]{xiao1}

\quad (i) Assume $\epsilon >0$ and $R\supseteq D_0$. Then
$$2\chi+6(g_2-1) \le K_X^2 \le 3\chi+5(g_2-1)-2\epsilon $$ and hence
$$\epsilon \le (\chi-g_2+1)/2.$$

\quad (ii) If either $\epsilon \le 0$ or else both $\epsilon >0$ and $R\not\supseteq D_0$, then
$$\max\{2\chi+6(g_2-1),\chi+7(g_2-1)+3\epsilon \} \le K_X^2 \le \min \{6p_g-5q+3g_2+2,7\chi+g_2-1\}.$$
\end{theorem}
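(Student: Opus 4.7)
The plan is to exploit the hyperelliptic nature of the general fibre of $f$: a smooth fibre is a genus $2$ curve, hence admits a canonical degree-$2$ morphism to $\PP^1$ branched at the $6$ Weierstrass points. This local picture globalises through the relative canonical bundle. Since $E = f_\ast(\omega_{X/D})$ has rank $2$ and the natural map $f^\ast E \to \omega_{X/D}$ is fibrewise the complete canonical linear series, one obtains a rational map $\psi\colon X \dashrightarrow P$ which is generically $2\!:\!1$ onto its image and factors fibrewise through the hyperelliptic double cover. When $\epsilon > 0$, $P$ carries the distinguished negative section $D_0$ with $D_0^2 = -\epsilon$ (cf.\ \cite[V.2.17, V.2.21]{h}). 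After resolving the indeterminacies of $\psi$ and contracting the exceptional components, one obtains a finite double cover $\pi\colon \widetilde X \to P$ with $\widetilde X$ birational to $X$, branched along an effective divisor $R \subset P$ whose restriction to a general fibre of $P\to D$ consists of the $6$ Weierstrass points.

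Next, I would translate $K_X^2$ into a statement about $R$ on $P$. The standard double cover formulas $\pi_\ast\Oo_{\widetilde X} = \Oo_P \oplus L^{-1}$ and $K_{\widetilde X} = \pi^\ast(K_P + L)$ with $L^{\otimes 2} = \Oo_P(R)$ yield
\[
K_{\widetilde X}^2 \;=\; 2(K_P + L)^2.
\]
Expanding in a basis of $\mathrm{Num}(P)$ adapted to $D_0$ and the fibre class $F_0$, and pinning down the numerical class of $L$ (equivalently, of $R$) through the identification $f_\ast\omega_{X/D} = E$ together with $\deg E = \chi - g_2 + 1$ as in Theorem \ref{f21}, one expresses $K_X^2$ as an affine-linear combination of $\chi$, $g_2 - 1$ and $\epsilon$, corrected by the number of $(-1)$-curves contracted on passing back to the relatively minimal model $X$. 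The universal lower bound $K_X^2 \ge 2\chi + 6(g_2 - 1)$ is then the genus-$2$ Xiao / Cornalba--Harris slope inequality, which follows from the semistability of $E$ and a direct intersection-theoretic estimate on $P$.

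The two cases split precisely on whether the negative section is absorbed into the branch locus. In case (i), the hypothesis $R\supseteq D_0$ combined with $\epsilon > 0$ forces $R = D_0 + R'$ with $R'$ effective and sharing no component with $D_0$; effectiveness of $R'$ together with $R'\cdot D_0 \ge 0$ sharpens the upper bound to $K_X^2 \le 3\chi + 5(g_2-1) - 2\epsilon$, after which the consequence $\epsilon \le (\chi - g_2 + 1)/2$ is algebraic from the slope lower bound. In case (ii) one has $R\cdot D_0 \ge 0$ either because no distinguished negative section exists ($\epsilon \le 0$) or because $D_0\not\subseteq R$; this nonnegativity, combined with nefness on the appropriate subcone of $\mathrm{Num}(P)$, bounds $(K_P + L)^2$ from both sides, which translates into $\max\{2\chi + 6(g_2-1),\, \chi + 7(g_2-1) + 3\epsilon\} \le K_X^2 \le \min\{6p_g - 5q + 3g_2 + 2,\, 7\chi + g_2 - 1\}$.

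The hard part, and the technical heart of the argument, will be the passage between $\widetilde X$ and the relatively minimal $X$: the map $\psi$ is typically undefined on the singular fibres, and the branch divisor $R$ may acquire non-reduced components or singularities there. One must resolve to a smooth, reduced branch locus with at worst rational double points, and then track the exceptional $(-1)$-curves carefully enough to compare $K_{\widetilde X}^2$ with $K_X^2$ exactly. Verifying the geometric dichotomy --- namely, whether $D_0$ is swept by fixed components of the fibrewise canonical pencil, which is precisely the condition $D_0 \subseteq R$ --- and its effect on the correction terms, is what makes the case analysis delicate.
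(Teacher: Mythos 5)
The paper itself offers no proof of this statement: it is quoted verbatim from Xiao's monograph \cite[Thm.~2.2]{xiao1}, so the only meaningful comparison is with Xiao's argument, and your outline does follow his framework --- the relative canonical map realises a birational model of $X$ as a double cover of a $\PP^1$-bundle $P$ over $D$, the double-cover formula $K^2 = 2(K_P+L)^2$ reduces everything to the numerical class of the branch divisor $R$, and the dichotomy is governed by whether $D_0\subseteq R$. That much is the right skeleton.

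But as a proof the proposal has a genuine gap: none of the stated inequalities is actually derived. Every coefficient in the theorem --- the $-2\epsilon$ in $3\chi+5(g_2-1)-2\epsilon$, the $3\epsilon$ in $\chi+7(g_2-1)+3\epsilon$, and especially the two upper bounds $6p_g-5q+3g_2+2$ and $7\chi+g_2-1$ of case (ii) --- comes out of a specific computation that you defer. Concretely, what is missing is (1) the exact numerical class of $R$ on $P$ in terms of $\deg E=\chi-g_2+1$ and $\epsilon$ (Xiao's $R\equiv 6D_0 + \cdots$ relation), and (2) the correction terms coming from the singularities of $R$: the passage from $\widetilde X$ to the relatively minimal $X$ is controlled by Xiao's singularity indices, and both the upper and the lower bounds are obtained by bounding those indices, not merely by ``$R\cdot D_0\ge 0$ and nefness.'' You flag this yourself as ``the hard part,'' which is accurate --- it is the entire content of the theorem. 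In particular the bound $6p_g-5q+3g_2+2$ cannot follow from positivity of $(K_P+L)^2$ alone, since it involves $p_g$ and $q$ separately rather than only through $\chi$; it requires the finer analysis of which singular fibre types can occur. As it stands the proposal is a correct roadmap to Xiao's proof, not a proof.
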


\begin{corollary}\label{f23}
\cite[p.18]{xiao1} $K_X^2 \le 8\chi(\Oo_X)$.
\end{corollary}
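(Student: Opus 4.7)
The plan is to derive this bound directly from Theorem \ref{f22} by exhausting its two alternatives and invoking the inequality $\chi(\Oo_X) \ge g_2 - 1$ (abbreviated $\chi \ge g_2 - 1$ in Xiao's notation) that was already established for genus-$2$ fibrations in subsection \ref{Sf2}.

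In case (i) of Theorem \ref{f22}, where $\epsilon > 0$ and $R \supseteq D_0$, I would take the upper bound
$$K_X^2 \le 3\chi + 5(g_2 - 1) - 2\epsilon.$$
Because $\epsilon > 0$, the term $-2\epsilon$ is negative and may be discarded, leaving $K_X^2 \le 3\chi + 5(g_2 - 1)$. Substituting $g_2 - 1 \le \chi$ then gives $K_X^2 \le 3\chi + 5\chi = 8\chi$.

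In case (ii), the right-hand side is the minimum of two quantities and in particular is at most $7\chi + g_2 - 1$. Applying $g_2 - 1 \le \chi$ once more yields $K_X^2 \le 7\chi + \chi = 8\chi$.

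The only obstacle is bookkeeping: verifying that the two alternatives of Theorem \ref{f22} exhaust all possibilities (they split on the sign of $\epsilon$ together with the inclusion $R \supseteq D_0$) and noting that in case (i) it is precisely the strict positivity $\epsilon > 0$ that licenses discarding $-2\epsilon$. No further geometric input is required; the corollary is an arithmetic consequence of Xiao's inequalities combined with the slope bound $\chi \ge g_2 - 1$ for genus-$2$ fibrations.
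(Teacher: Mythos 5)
Your derivation is correct: both alternatives of Theorem \ref{f22} are exhausted, the arithmetic in each case is right (drop $-2\epsilon$ using $\epsilon>0$ in case (i), take the second term of the minimum in case (ii)), and the only extra input, $g_2-1\le\chi$, is exactly the boxed inequality established earlier for genus-$2$ fibrations. The paper itself gives no proof and simply cites Xiao, and your argument is precisely the intended deduction from Theorem \ref{f22}.
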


Corollary \ref{f23} shows that the invariants of $X$ are far from extremal: for surfaces of general type the bound is 
$K_X^2 \le 9\chi(\Oo_X)$ \cite[Ch.\thinspace VII, \S 4]{bhpv}.

There is a very long description of surfaces with $q=g_2+1$ in \cite[\S 3]{xiao1} and
almost all genus $2$ fibrations have $q=g_2$. For such fibrations the upper bounds in Thm.\thinspace \ref{f22} give
$$K_X^2 \le \min \{6\chi+4(g_2-1),7\chi+g_2-1\}.$$
Moreover, \cite[p.\thinspace 19]{xiao1} gives a picture showing some forbidden parts on the plane $(K_X^2,\chi)$.
In a small range of integers $g_2$, $\chi:= \chi(\Oo_X)$ and $\epsilon>0$ all numerical invariant are the numerical invariants of some genus $2$ fibration.
\begin{theorem}\cite[Thm.\thinspace 2.9]{xiao1}
Fix integers  $x$, $g_2\geq 0$,  $\epsilon\geq 0$, $\chi\ge g_2-1$ such that 
 $$\epsilon \le \chi-g_2+1  \quad \text{and}\quad \epsilon\equiv \chi +g_2-1\pmod{2}.  $$
  If $\epsilon \neq 0$ assume that $x$ satisfies the inequalities of case  (ii) of Theorem \ref{f22} (with $x$ in the place of $K_X^2$). 
  Then there is a genus $2$ fibration $X\to D$ with $\chi(\Oo_X)=\chi$, $K^2_X=x$, $g(D) =g_2$ and 
  with $\epsilon$ as the degree of stability.
\end{theorem}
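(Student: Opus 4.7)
The plan is to construct $X$ explicitly as a double cover of a $\PP^1$-bundle over $D$, reversing the structural description used in Theorem~\ref{f22}: every relatively minimal genus $2$ fibration $f\colon X\to D$ factors through its relative canonical map $X\to P=\PP(f_\ast\omega_{X/D})$ as a degree-two covering branched along the divisor $R\subset P$. So I fix $D$ smooth projective of genus $g_2$, choose a rank $2$ vector bundle $E$ on $D$ with $\deg E=\chi-g_2+1$ and stability invariant $\epsilon$ (i.e.\ the maximal degree of a rank $1$ subsheaf differs from that of the quotient by $\epsilon$), form $P=\PP(E)$ with tautological class $\sigma$, choose a smooth effective $R\in|6\sigma+\pi^\ast M|$ for an appropriate line bundle $M$ on $D$, and let $X\to P$ be the branched double cover. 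The parity hypothesis $\epsilon\equiv\chi+g_2-1\pmod 2$ is the parity of $\deg E$, which is the necessary congruence for a rank $2$ bundle with the prescribed degree and stability invariant to exist, and $\epsilon\le\chi-g_2+1$ says $\epsilon\le\deg E$.

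By Riemann--Hurwitz applied fibrewise to $P\to D$, a generic fibre of $X\to D$ is a smooth curve of genus $2$, and $f_\ast\omega_{X/D}$ recovers $E$, so the stability invariant of the constructed fibration is the chosen $\epsilon$. Fujita's identity $\chi(\mathcal O_X)=\deg E+g_2-1$ then gives $\chi(\mathcal O_X)=\chi$ automatically, while the standard double-cover formulas
$$K_X^2=2(K_P+L)^2,\qquad 2L\sim R,$$
expand $K_X^2$ into an affine function of $\deg M$ whose coefficients depend only on $\chi$, $g_2$ and $\epsilon$, after using $\sigma^2=\deg E$ and $K_P\equiv -2\sigma+\pi^\ast(K_D+\det E)$. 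Solving for $\deg M$ matches the prescribed $K_X^2=x$, and it remains to check that the resulting linear system $|6\sigma+\pi^\ast M|$ contains a smooth member, which follows from Bertini once one verifies that the system is basepoint free on a dense open subset of $P$.

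The hard part is smoothness of $R$ in the boundary regime where $\epsilon$ is close to its maximum $\chi-g_2+1$: the ruled surface $P$ then contains a unique section $D_0$ with $D_0^2=-\epsilon<0$, and for sufficiently small $\deg M$ the system $|6\sigma+\pi^\ast M|$ has $D_0$ in its base locus, so a general member contains $D_0$ with some multiplicity $a\in\{1,2\}$. This is precisely case~(i) of Theorem~\ref{f22}. The construction handles this by writing $R=aD_0+R'$ and applying Bertini to the residual linear system of $R'$, which becomes basepoint free exactly once the case~(ii) inequalities on $x$ are imposed; this is what makes the case $\epsilon\ne 0$ hypothesis in the theorem necessary. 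The case $\epsilon=0$ is easier because one takes $E$ split with equal-degree summands, so $P\cong D\times\PP^1$ has a transitive automorphism group and Bertini produces a smooth $R$ without additional constraints on $x$, explaining why the statement imposes no hypothesis on $x$ when $\epsilon=0$.
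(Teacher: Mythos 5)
The paper itself offers no proof of this statement---it is quoted directly from Xiao's monograph---so your sketch can only be measured against Xiao's actual argument. Your starting point matches his: every relatively minimal genus $2$ fibration is the canonical resolution of a degree-two cover of $P=\PP(E)$ branched along a divisor meeting each fibre in six points, and the existence theorem is proved by reversing this description.

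The genuine gap is in the step where you ``solve for $\deg M$'' to match $K_X^2=x$. The line bundle $L$ with $2L\sim R$ is not a free parameter once you insist that $f_\ast\omega_{X/D}$ recover $E$: from $\phi_\ast\omega_{X/P}=\Oo_P\oplus L$ one gets $f_\ast\omega_{X/D}=\pi_\ast(\omega_{P/D}\otimes L)$, which is $E$ twisted by a line bundle determined by $M$, so requiring $\deg f_\ast\omega_{X/D}=\chi-g_2+1$ (which is what forces $\chi(\Oo_X)=\chi$) pins down $\deg M$. Equivalently, both $\chi(\Oo_X)=2\chi(\Oo_P)+\tfrac{1}{2}L\cdot(L+K_P)$ and $K_X^2=2(K_P+L)^2$ depend on $\deg M$, so you cannot hold $\chi$ fixed while tuning $\deg M$ to hit an arbitrary $x$: with a \emph{smooth} branch divisor the pair $(\chi,K_X^2)$ is completely determined by $g_2$, $\epsilon$ and $\deg M$, and only one value of $x$ in the admissible range of Theorem \ref{f22} is attained. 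The essential content of Xiao's proof, missing from your sketch, is the use of branch divisors with prescribed (possibly infinitely near) singularities together with the canonical resolution of the resulting double cover: each singularity type contributes a computable defect to $K^2$ and to $\chi$, and one realises every admissible pair $(x,\chi)$ by distributing such singular points on $R$. Two further inaccuracies: a rank $2$ bundle with $\epsilon=0$ need not be a twist of the trivial bundle (equal-degree summands do not have to be isomorphic), and even when $P\cong D\times\PP^1$ there is no transitive automorphism group for $g_2\ge 1$, so the Bertini argument you invoke in the $\epsilon=0$ case does not apply as stated.
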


\subsubsection{$c_2(X)$ and $c_1(X)^2$}

Let $X$ be a smooth connected complex surface and $\pi\colon \widetilde X\to X$ be the blow up of $X$ at one point
 \cite[p.\thinspace 473]{gh}. Then \cite[p.\thinspace576]{gh} shows that
 $$c_2(\widetilde X) =c_2(X)+1, \quad \quad c_1(\widetilde X)^2= c_1(X)^2 -1, \text{and}$$
  $$c_1(X)^2+c_2(X) =12\chi (\Oo_X) =12\chi(\Oo_{\widetilde X})=c_1(\widetilde X)^2+c_2(\widetilde X).$$ 

 Thus,
  one often assumes that the surface $X$ has no negative curve of the first kind, i.e. no curve $T\subset X$ such that $T\cong \PP^1$ and $T^2=-1$. Indeed, if any such $T$ exists we may blow it down to
  get another smooth compact surface. Every smooth compact surface has a minimal model, i.e. there is a finite sequence
$u\colon X\to X_1$ of blowing downs of curves of the first kind with $X_1$ a minimal model and 
one may then only study $c_2(X_1)$ and $c_1(X_1)^2$. The situation is a bit different if we wish to study  fibrations.

\begin{remark}
In \cite[\S\thinspace 6]{xiao1} there is a complete list of all surfaces of general type with more than one fibration of genus $2$ curves.
Fix an integer $g\ge 1$. A surface $X$ of general type has at most finitely many fibrations whose general fibre is a 
genus $g$ curve \cite[Prop.\thinspace 6.1]{xiao1}. There is a surface of general type $X$ with fibrations by genus $g$ curves for infinitely many genera $g$ \cite[Ex.\thinspace 6.3]{xiao1}.
For each $g\ge 1$ there is an example of a surface $X$ with $\kappa(X)=1$ 
and infinitely many (countably many)  fibrations whose general fibre is a smooth curve of genus $g$
\cite[Ex.\thinspace 6.2]{xiao1}.
\end{remark}

\begin{remark}\label{ccc1}
There are many compact complex surfaces which admit no non-constant holomorphic maps 
$f\colon X\to D$ with $D$ any smooth compact curve. For instance, no surface with 
$\mbox{Pic}(X)\cong \ZZ$ has such an $f$ for any compact $D$. Accordingly, $\PP^2$ has no such map $f$,
neither do even some K3 surfaces, many surfaces of general type, and
also all complete intersection surfaces. Lefschetz pencils show that, for any $X$, 
there exists a blowing up at finitely many points $u\colon  \widetilde X\to X$ such that
 $\widetilde X$ has a surjection $f\colon \widetilde X\to \PP^1$ with each fibre either smooth or irreducible 
 with only one node. However, $u$ is usually a blow up of $X$ at many points.
\end{remark}

For a surface of {\bf general type} $\boxed{c_1(X)^2\le 3c_2(X)}$, see \cite[Ch.\thinspace VII, Th.\thinspace 4.1]{bhpv}, and for {\bf minimal surfaces of general type} $\boxed{c_1(X)^2>0}$.\\

Now we consider surfaces (minimal or not) with a fibration. 

\begin{remark}
Let $f\colon X\to D$ be a minimal fibration as in Def.\thinspace\ref{min}, with $D$ a smooth curve of genus $g_2>0$. Since every holomorphic map $\PP^1\to D$ is constant, $X$ contains no rational curve not even singular ones.
Indeed, suppose there is a, possibly singular, rational curve $T\subset X$ and call $w\colon \PP^1\to T$ the normalisation map.
Since $f\circ w\colon \PP^1\to D$ is constant, $T$ is contained in a fibre of $f$. Thus if $f$ is a minimal, then $X$ is minimal.
 In particular $X$ has no exceptional curve of the first kind, i.e. it is a minimal surface as an abstract surface.
 \end{remark}

For general properties of surfaces, we recommend \cite[Ch.\thinspace IV]{bhpv}, in particular the first 6 sections. If we allow the non-algebraic case and call $a(X)\in \{0,1,2\}$ the {\it transcendental degree} of the field $\CC(X)$ over $\CC$, 
then the existence of a morphism $f\colon X\to D$ implies that $X$ contains a one-parameter family of curves and hence $a(X)>0$. 
The case $a(X)=1$ may arise only if $D$ has genus $0$ and the fibres of $f$ have genus $1$, so a case not of interest in this subsection.
 We have $a(X)=2$ if and only $X$ is projective \cite[Cor.\thinspace IV.6.5]{bhpv}. \\

 We always have $q(X)\le g_1+g_2$, where $g_1$ is the genus of the general fibre of $f$ and $g_2$ is the genus of the base,
and $$\boxed{\chi(\Oo_X) \ge 2(g_1-1)(g_2-1)},$$ 
see \cite[Ch.\thinspace III, Cor.\thinspace11.6]{bhpv}, and for further details see \cite[Ch.\thinspace III, Prop.\thinspace 11.4(ii)]{bhpv} where the exact contribution of the singular fibres is computed.

If both $g_1\ge 2$ and $g_2\ge 2$, then $X$ is of general type, 
because in such cases any rational or elliptic curves are contained in fibres of $f$ and there are only finitely many of those. In these cases we also have $q(X)\ge g_1\ge 2$.
Now assume (any $g_1$) that $X$ is of general type.
 If we call $X'$ the minimal reduction of $X$, we have not only $c_1(X')^2\le 3c_2(X')$ 
 (which is better than just using the same inequality for $X$), but also the Noether inequality $$p_g(X')\le \frac{1}{2}c_1(X')^2 +2$$ \cite[Ch.\thinspace VII, Thm.\thinspace 3.1]{bhpv}, with $p_g(X)=p_g(X')$. Surfaces $X'$ for which equality holds are called surfaces on the Noether's line. Recall also that $q(X') =q(X)$. As  corollaries one obtains other inequalities and a sufficient condition for having $q(X')=0$ \cite[IV. Cor.\thinspace 3.2\&3.3]{bhpv}.
 
 We also recall the Albanese mapping $\alpha_X\colon X \to \mathrm{Alb}(X)$ with $\mathrm{Alb}(X)= (\mathrm{Pic}_0 X)^\vee$ 
 a complex compact torus (it is algebraic because $X$ is algebraic and hence K\"{a}hler and in such 
 a case $\dim \mathrm{Alb}(X) =q(X)$ \cite[pp.\thinspace 44--47]{bhpv}.
 If $\alpha_X(X)$ is a curve, then the curve $\alpha_X(X)$ is smooth, connected and of genus $q(X)$ \cite[Cor.\thinspace I.13.9(iii)]{bhpv}.

Since any morphism from $\PP^1$ to a compact complex torus is constant, $\alpha_X(T)$ is a point for every rational curve $T\subset X$ (even singular $T$, but with $\PP^1$ as its normalisation), if $u\colon X\to X'$ is the minimal reduction of $X$, then $\alpha _X = \alpha_{X'}\circ u$.
 
 \begin{remark}\label{cca1}
 Assuming $q(X)=1$, then we have that $\mathrm{Alb}(X)$ is an elliptic curve and $\alpha_X\colon X\to \mathrm{Alb}(X)$ is a fibration (i.e. it has connected fibres and only finitely many singular fibres). A general fibre may be rational, but in such a case $X$ is birational to $D\times \PP^1$. 
 All possible genera may occur as the genus of a general fibre of $\alpha_X$.
 \end{remark}

We always have $q(X)\ge g_2$, since $f^\ast\colon \mathrm{Pic}(D) \to \mathrm{Pic}(X)$ is injective, 
given that the fibres of $f$ are connected. Thus, a necessary condition to have a fibration with target of positive genus is that $q(X) >0$. By Remark \ref{cca1} all $X$ with $q(X)=1$ have a fibration with $g_2=1$ and the fibration is unique, up to isomorphisms of $X$ and the base.

For fibrations by curves of genus 3 or higher see \cite{bs}.

 \subsection{Examples of hyperelliptic fibrations with nodes}\label{example-eisen}
By definition, a smooth curve $C$ of genus $g\ge 2$ is  hyperelliptic if there is a degree $2$ morphism $u: C\to \PP^1$. 
Equivalently, by the universal property of the projective line, 
$C$ is hyperelliptic if and only if there exist a degree $2$ line bundle $L$ on $C$ and a $2$-dimensional linear space $V\subseteq H^0(L)$ with no base points (hence inducing the map $u$). 
  
\begin{figure}[h]
\includegraphics[scale=0.18]{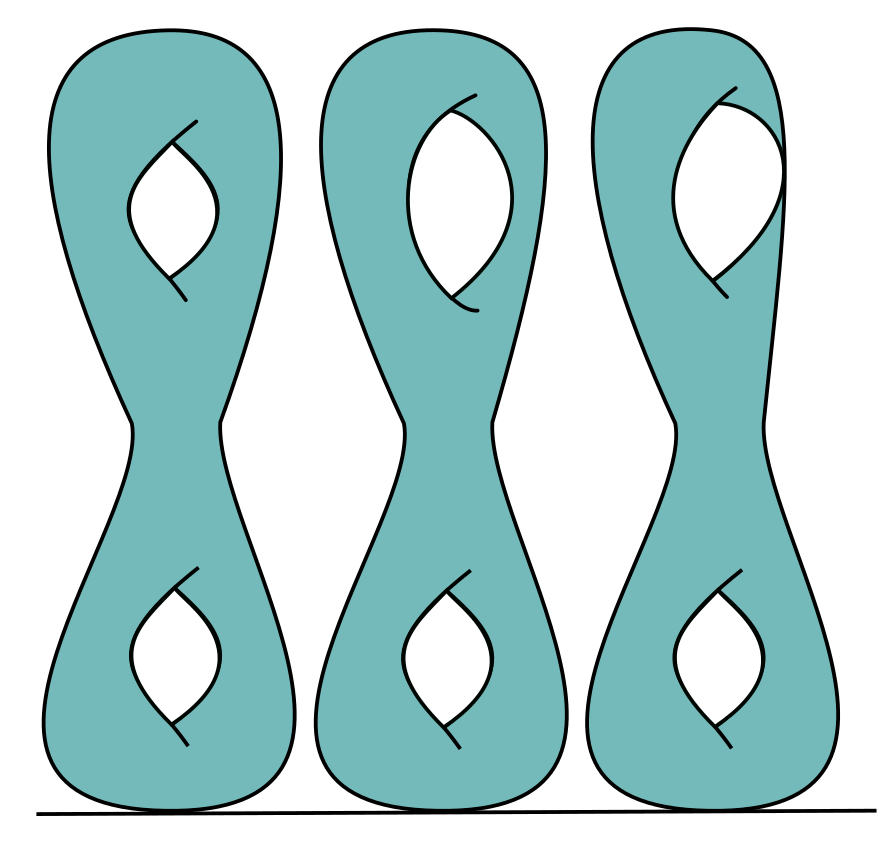}
\caption{Hyperelliptic family with irreducible singular fibre}
\label{fig01}
\end{figure}  
  
  Around 1817, N. Abels showed that such genus $g$  curves may be understood as follows. 
  Fix a degree $2g+2$ polynomial $f(x)\in \CC[x]$ without multiple roots. Consider the affine curve 
  $$y^2=f(x).$$ Its unique smooth projective completion is a hyperelliptic curve of genus $g$, the morphism $u$ being induced by the projection $(x,y)\to x$. If we view $\CC^2$ as an affine chart of the plane $\PP^2$, then the corresponding closure of the affine curve is singular.

 A natural ambient for  hyperelliptic curves is the weighted projective plane $\mathbb P(1,1,g+1)$. We take coordinates $x_0,x_1,y$ 
 and call $h(x_0,x_1)$ the homogeneous degree $g+1$ polynomial such that $h(1,x_1) =f(x)$. 
 Giving weight $g+1$ to the variable $y$, the equation $y^2=h(x_0,x_1)$ defines our curve in $\mathbb P(1,1,g+1)$.
  Furthermore, fixing any integer $t$ such that $1\le t\le g$, we may take as $f(x)$ a degree $2g+2$ 
  polynomial with $2g+2-2t$ simple roots and $t$ roots of multiplicity $2$. Then the curve $y^2=h(x_0,x_1)$ is an irreducible and nodal curve of arithmetic genus $g$ with exactly $t$ nodes.\\
 
 Now we explain two other constructions 
 containing all smooth genus $g$ hyperelliptic curves
 within an ambient surface (here a smooth surface)  and, containing
  in the same linear system (so the same arithmetic genus $g$) irreducible curves with exactly $t$ nodes for all $t=1,\dots ,g$.\\
 
 \quad  {\bf First construction:} Set $F_0:= \PP^1\times \PP^1$. We have $\mathrm{Pic}(S)\cong \ZZ^2$ and 
 we may take as a generators of $\mathrm{Pic}(F_0)$ the isomorphism classes $\Oo_{F_0}(1,0)$ and $\Oo_{F_0}(0,1)$
 of the fibres of the $2$ projections $F_0\to \PP^1$. For all $(a,b)\in \NN^2$ we have $ h^0(\Oo_{F_0}(a,b)) =(a+1)(b+1)$.
 
 Now assume $a>0$ and $b>0$. In this case $\Oo_{F_0}(a,b)$
 is very ample. Hence, a general element $|\Oo_{F_0}(a,b)|$ is smooth and connected.
  We have $h^0(\Oo_C)=1$ 
 for any $C\in |\Oo_{F_0}(a,b)|$, even for the ones with multiple components. 
 
 Since $\omega_{F_0}\cong \Oo_{F_0}(-2,-2)$, $\omega_C\cong \Oo_C(a-2,b-2)$
 for any $C\in |\Oo_{F_0}(a,b)|$. Thus, all $C\in |\Oo_{F_0}(a,b)|$ have arithmetic genus $1+ab-a-b$. \\
 
 \quad (a) {\bf The parameter space:}
 Fix an integer $t$ such that $0\le t \le 1+ab-a-b$. Let $V(a,b,t)$ denote the set
 of all irreducible and nodal $C\in |\Oo_{F_0}(a,b)|$ with exactly $t$ nodes. 
 We have $$V(a,b,t)\ne \emptyset,$$ and  $V(a,b,t)$ is irreducible with $\dim V(a,b,t) =\dim |\Oo_{F_0}(a,b)|-t
 =ab+a+b-t$, see \cite{tan,ty}. 
 Take $a=2$ and $b=g+1$. Each $C\in |\Oo_{F_0}(2,g+1)|$ has arithmetic genus $g$. 
 Each smooth $C$ is hyperelliptic (use either of the $2$ projections $F_0\to \PP^1$). 
 
 Let $D$ be a smooth hyperelliptic curve of genus $g$. Fix a general $L\in \mathrm{Pic}^{g+1}(D)$. Since $g\ge 2$ and $L$ is general, $h^0(L)=2$ and $L$ is base point free.
 Thus $|L|$ induces a degree $g+1$ morphism $v: D\to \PP^1$. Let $u: D\to \PP^1$ be the degree $2$ map given by the hyperellipticity of $D$ and $R\in \mathrm{Pic}^2(D)$ the associated line bundle. Let $w =(u,v)\to \PP^1\times \PP^1$ be the morphism induced by $u$ and $v$. Since $L$ is not a multiple of $R$ and $4\deg (u)=2$, $w$ is birational onto its image. The curve $\mathrm{Im}(w)\in |\Oo_{F_0}(2,g+1)|$ has arithmetic genus $g$ and the smooth genus $D$ curve as its normalization. Thus $\mathrm{Im}(w)\cong D$
 and $w(D)\in |\Oo_{F_0}(2,g+1)|$.
 
 In general, we may take any Hirzebruch surface $F_e$, $e\ge 0$ \cite[Ch.\thinspace V,\S 2]{h}, and as in step (b) below we may use explicit equations for the linear systems in $F_e$,  many of which contain  hyperelliptic curves.  
 For any $0\le t\le p_a$, where $p_a$ is the arithmetic genus of any element of $|\Oo_{F_e}(ah+bf)|$, the
 existence, irreducibility, and the dimension  of the family all irreducible nodal curves in a prescribed linear system $|\Oo_{F_e}(ah+bf)|$ with exactly $t$ nodes 
 are given in \cite{tan,ty}.\\

 \quad (b) {\bf Hyperelliptic families with irreducible fibres having 1 or 2 nodes}: Fix an integer $g\ge 2$. Let $\Hh(g)$ be the stack of all smooth hyperelliptic curves of genus $g$. 
We recall that any smooth genus $2$ curve is hyperelliptic.
For any $X\in \Hh(g)$ call $h_X\colon X\to \PP^1$ the degree 2 morphism, usually denoted $g^1_2$,
corresponding (by definition) to the hyperelliptic curve  $X$.

We recall that such degree 2 morphism is unique for each hyperelliptic curve of genus $g>1$
and for $g=2$ it is the canonical map. Denote by $R$ the only spanned degree $2$ line bundle on $X$, so that
$h_X$ is the morphism associated to the complete linear system $\vert R\vert$.
Since $\deg (L) =2g+2\ge 2g+1$, the line bundle $L:= R^{\otimes (g+1)}$ is very ample and non-special. Thus $h^0(L)=\deg(L)+1-g$ and
$|L|$ induces an embedding $f\colon X \to \PP^{g+1}$. Fix $D_1,D_2,D_3\in |R|$ such that $D_i\ne D_j$ for all $i\ne j$. Since $f$ is an embedding and $\deg (D_i)=2$, $f(D_i)$ spans a line $\langle D_i\rangle$. The line bundle $R^{\otimes (g-1)}$ induces a degree $2$ map with as image $\PP^1$ (case $g=2$) or a rational normal curve of $\PP^{g-1}$ (case $g\ge 2$). We have $h^0(R^{\otimes g-2}) = g-1$. We see that the lines
$\langle D_i\rangle$ and $\langle D_j\rangle$ span a plane and hence they meet. 
The $3$ lines $\langle D_1\rangle \cup \langle D_2\rangle \cup \langle D_3\rangle $ span a $\PP^3$, because $h^0(R^{\otimes g-2}) = g-1 =h^0(R^{\otimes (g+1)}) -4$. 
We obtain that $f(X)$ is contained in a cone $T$ with vertex $o\notin f(X)$ and as a base a rational normal curve of $\PP^{g+1}$,
 see \cite{eis}. Let $u\colon Y\to T$ be the minimal resolution of $T$. The surface $Y$ is isomorphic to the Hirzebruch surface $F_{g+1}$ and $h:= u^{-1}(o) \cong \PP^1$. We have $\mathrm{Pic}(Y)\cong \ZZ^2$ with $h$ and a fibre $f$ of its ruling (i.e. the strict transform of a line of $T$ passing through $o$) as a basis over $\ZZ$,
with intersection numbers $$f^2=0, \quad h\cdot f =1, \quad \text{and} \quad h^2=-g-1.$$ 
 For any curve $D\subset F$ (even reducible) which is not a member of $|cf|$ for any $c>0$, there are integers
$a>0$ and $b\ge a(g+1)$ such that $D\in |\Oo_Y(ah+bf)|$. 
The linear system $|\Oo_Y(ah+bf)|$ contains a curve $D$ such that $D\cap h=\emptyset$, i.e. such that $o\notin u(D)$ if and only if $b =a(g+1)$. 

The case $a=2$ and $b=2g+2$ is the linear system corresponding to our genus $g$ hyperelliptic curves. 
We take an arbitrary integer $a>0$ and give an explicit parameter space for the part of the linear system $|\Oo_Y(ah+a(g+1)f)|$ not intersecting $h$.

 Fix variables $x_0,x_1,y$ and give weight $1$ to the variables $x_0$ and $x_1$ and weight $g+1$ to the variable $y$. Let $V(g+1,a)$ denote the set of all $f\in \CC[x_0,x_1,y]$ which are weighted homogeneous with total degree $a(g+1)$. The zero-locus of any $v\in V(a+1,g)$ is an element $D\in |\Oo_Y(ah+(g+1)f|$ such that
$D\cap h=\emptyset$.
Those belonging to the linear system $|\Oo_Y(ah+a(g+1)f)|$ such that $D\cap h\ne \emptyset$ have $h$ as a component,
 because the intersection number
of $h+(g+1)f$ and $h$ is $0$. 

Take again $a=2$. In this case we get a linear system whose smooth elements are smooth genus $g$ hyperelliptic curves.
 Their nodal degenerations may contain $t$ nodes, for any $0\le t\le g$ by \cite{tan,ty} and also the reducible nodal curves of the form $D_1\cup D_2$ with $D_1\cong D_2\cong \PP^1$ and $\#(D_1\cap D_2)=g+1$.  It is sufficient to take as $D_1$ and $D_2$ two general elements of $|h+(g+1)f|$.\\
 
 \begin{figure}[h]
\includegraphics[scale=0.18]{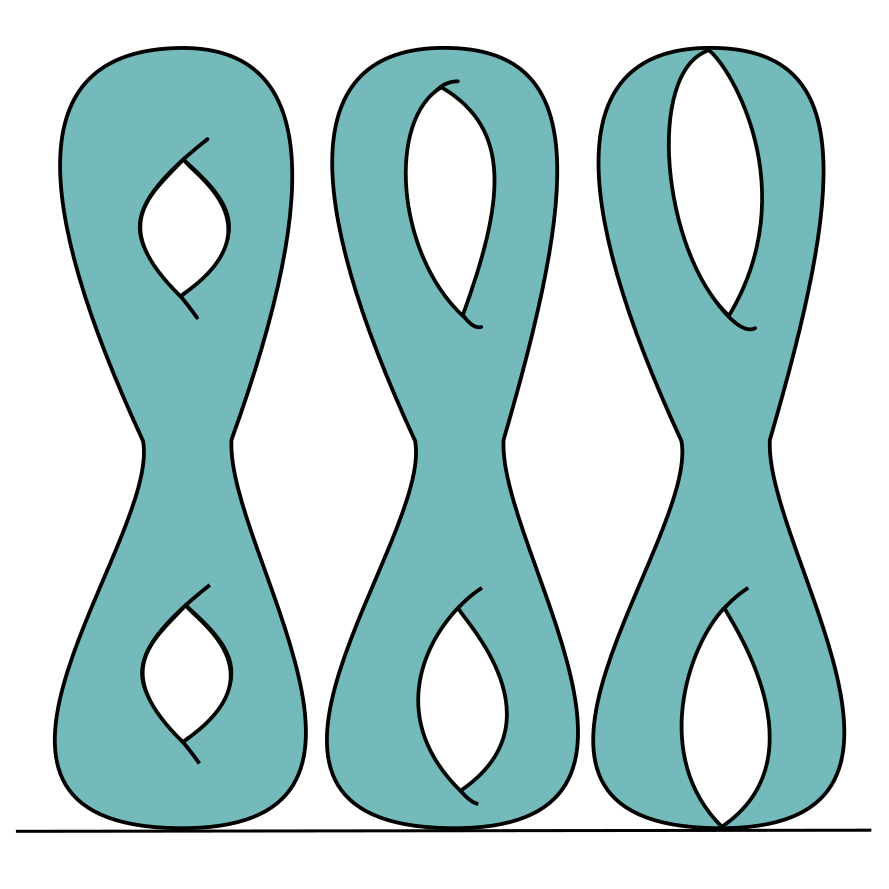}
\caption{Family with 1 irreducible fibre having 2 nodes}
\label{2nodes}
\end{figure}

 \quad (c) {\bf Hyperelliptic families with reducible nodal fibres}: For an integer $g\ge 2$,
  let $\overline{\Mm}_g$ denote the coarse moduli space of stable genus $g$ curves. In the boundary $\overline{\Mm}_g\setminus \Mm_g$ we get all nodal reducible curves $C\cup E$ with $C$ a smooth curve of genus $g$, $E$ an elliptic curve and $C\cap E$ a unique point.
 If $g=2$ we get a nodal union of $2$ elliptic curves which 
 occurs as a limit of a family of smooth genus $2$ curves. 
 Now assume $g\ge 3$ and assume that $C$ is hyperelliptic. Both the theory of generalised coverings \cite{hm} 
 and that  of limit linear series \cite{EH} give that $C\cup E$ is the flat limit of a family of hyperelliptic curves. See also
  \cite[Ch.\thinspace 6 C]{hi}.
 In $\overline{\Mm}_g$ there  exists a point representing the nodal reducible curve $D_1\cup D_2$ described at the end of part (b).

\subsection{Examples of genus 2 fibrations with nodes}\label{example-node}
We use another construction to build more examples of hyperelliptic fibrations with nodes.
\subsubsection{Case $S\ne \emptyset$} 

 {\bf Second construction}: 
  Let $X$ be a smooth Del Pezzo surface of degree $1$ \cite[Ch.\thinspace 8]{do}.
This smooth surface $X$ may be realised as a blowing up of $\PP^2$ at a set $S\subset \PP^2$
of $8$ points, with the conditions that no $3$ of the points of $S$ are colinear, no $6$ of the points of $S$ are contained in a conic and
no cubic surface passing through each point in S that is singular at one of them.

It is  most useful  to see $X$ as a sextic hypersurface in the weighted projective space 
$\PP(1,1,2,3)$ (We recall here that this notation of the $4$ coordinates weights means that 
$\lambda[x_0,x_1,x_2,x_3] = [\lambda x_0,\lambda x_1,\lambda^2  x_2,\lambda^3x_3]$.

 For any integer
  $r\ge 1$ we have $$\dim |-rK_X| =r(r+1)/2,$$ 
  this is the case $d=1$ of \cite[Lemma 8.3.1]{do}. Hence $\dim |-K_X|= 1$ and $\dim |-2K_X|=3$.
The pencil $|-K_X|$ had a unique base point $p$, and a general element of $|-K_X|$ is a smooth elliptic curve. Every element of $|-2K_X|$ has arithmetic genus $3$, a general element of $|-2K_X|$ is smooth and among the elements of $|-2K_X|$ there are two curves $E_1\cup E_2$ with $(E_1,E_2)$ general in $|-K_X|$.\\

\quad (a) {\bf Existence of reducible curves with one node}: 
 So we have a $2$-dimensional family of nodal unions of $2$ elliptic curves with a unique node at $p$, 
inside the $3$-dimensional projective space $|-2K_X|$. 
 This is illustrated in Figure \ref{red1node}.
\begin{figure}[h]
\includegraphics[scale=0.18]{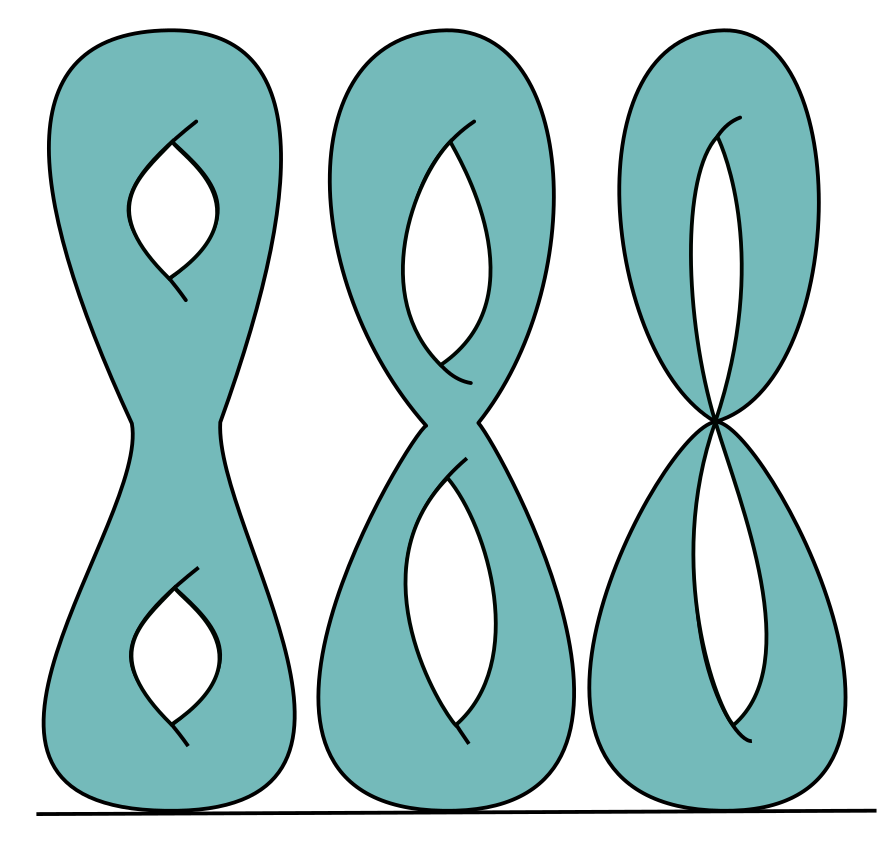}
\caption{Family with node on a reducible fibre}
\label{red1node}
\end{figure}

A general line in $|-2K_X|$ gives a fibration with $\PP^1$ as its base.
This type of construction, which   was earlier considered by  Halphen, is described  further details on the section of Halphen's pencils 
 in \cite[Ex. 7.20]{do}.\\

\quad (b) {\bf Existence of irreducible curves with nodes on $S$}: 
This paragraph is due to the referee.
The linear system $|-2K_X|$ gives a double cover $\pi\colon X \rightarrow \mathbb P(1, 1, 2)$ 
branched over the vertex of the quadric cone  $\mathbb P(1, 1, 2) \subset  \mathbb P^3$ 
and a smooth curve $R$ of degree $6$.
 Take general point $P$ in X that is mapped to R, and let $\mathcal P$ 
  be the pencil in$|-2K_X|$  that consists of pre-images of hyperplane sections of the cone $\mathbb P(1, 1, 2)$
   that are tangent to $R$ at $\pi(P)$.
    Then a general member of the pencil $\mathcal P$ 
     is irreducible and has a node at $P$. 

\begin{figure}[h]
\includegraphics[scale=0.18]{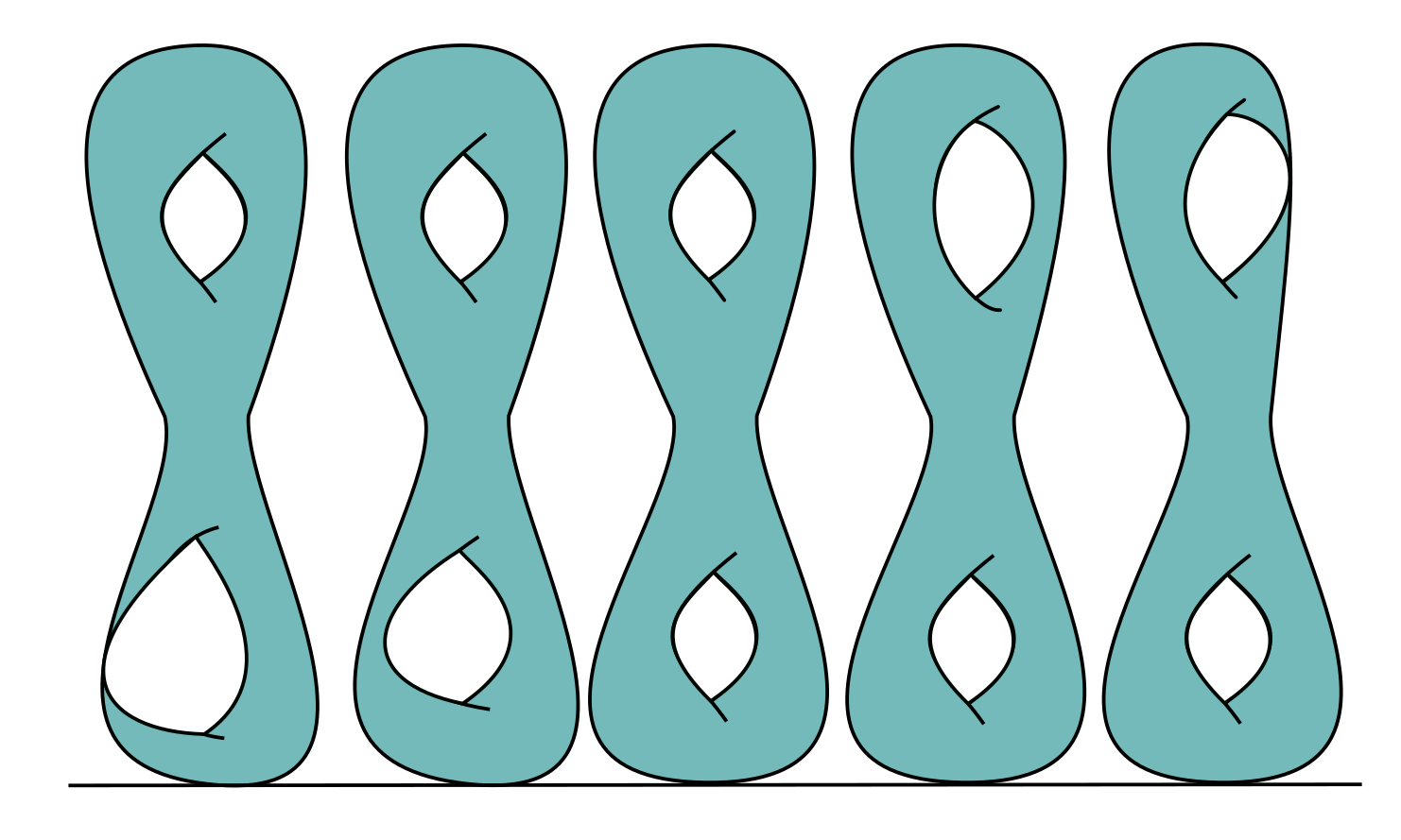}
\caption{Family with 2 nodes on different fibres}
\label{2nodes2fibres}
\end{figure}

\subsection{Families of hyperelliptic curves in $\PP^1\times \PP^1$ }\label{toric-families}

For completeness we present some further details of the toric case.
However, we observe that our constructions provide genus 2 unions of elliptic curves inside toric varieties
only in the case just described above, which is a very particular construction on a blowing up of $\mathbb P^2$.\\

Fix integers $g\ge 2$ and $0\le c\le g$ and a general $S\subset \PP^1\times \PP^1$ with $\#S=c$. Now
 we consider the zero-dimensional scheme $Z$ defined by 
 $$Z:= \cup _{p\in S}2p.$$
 
 Elements of $|\Oo_{\PP^1\times \PP^1}(2,g+1)|$ containing $Z$ are exactly the elements
of $|\Oo_{\PP^1\times \PP^1}(2,g+1)|$ singular at all points of $S$. We have $h^0(\Oo_{\PP^1\times \PP^1}(2,g+1)) = 3(g+2)$.
It is well-known that $h^0(\Ii_Z(2,g+1)) =3(g+2)-3c$ (\cite{l}), i.e. $$\dim | \Ii_Z(2,c)| =3g+5-3c$$
 and that a general $D\in |\Ii_Z(2,c)|$ is an integral curve of arithmetic genus $g$, geometric genus $g-c$ and exactly $c$ ordinary nodes as singularities. If $S$ is defined over $\RR$
we may even find $D$ defined over $\RR$. 

A general $C\in |\Oo_{\PP^1\times \PP^1}(2,g+1)|$ is smooth. Since $\omega _{\PP^1\times \PP^1}\cong \Oo_{\PP^1\times \PP^1}(-2,-2)$, the adjunction formula gives $\omega _C\cong \Oo_C(0,g-1)$.
So, $\deg (\omega _C)=2g-2$ and  $C$ has genus $g$.

Therefore, we get a $3g+5$-dimensional family of smooth genus $g$ curves. All of them are hyperelliptic.
Indeed, the $2$-to-$1$ morphism $C\to \PP^1$
is induced by the projection $\PP^1\times \PP^1$ onto one of its factor induced by the linear system $|\Oo_{\PP^1\times \PP^1}(0,1)|$ (we have $h^0(\Oo_{\PP^1\times \PP^1}(0,1))=2$ and the intersection number $(0,1)\cdot C$ is $2$, because
$(0,1)\cdot (2,g+1) =2$.

\begin{remark}
The linear system $|\Oo_{\PP^1\times \PP^1}(2,g+1)|$, $g\ge 2$, contains all smooth genus $g$ hyperelliptic curves.
The linear system $|\Oo_{\PP^1\times \PP^1}(1,1)|$ embeds $\PP^1\times \PP^1$ as a smooth quadric surface $Q$. In these examples each smooth $C\in |\Oo_{\PP^1\times \PP^1}(2,g+1)|$ is embedded as a degree $g+3$ non-special curve in $\PP^3$. Conversely, take any smooth $C\subset Q$ of degree $g+3$ and genus $g\ge 2$, say $C\in |\Oo_{\PP^1\times \PP^1}(a,b)|$. Because $g>0$, we have $a\ge 2$ and $b\ge 2$. 
The isomorphism  $\Oo_{\PP^3}(1,1)_{|Q} \cong \Oo_{\PP^1\times \PP^1}(1,1)$ gives $g+3=\deg(C) =a+b$. 
Since $\omega _{\PP^1\times \PP^1}= \mathcal O_{\PP^1\times \PP^1}(-2,-2)$, the adjunction formula
gives $\omega_C\cong \Oo_C(a-2,b-2)$ and consequently $g =ab-a-b+1$. Up to a change of the two factors of $\PP^1\times \PP^1$ we may assume $a\le b$. We get $a=2$ and $b=g+1$.
Thus, curves of degree $g+3$ and genus $g$ contained in $Q$ are hyperelliptic.
\end{remark}

 \section{\bf  Fibrations with smooth fibres}\label{smooth-fibres}

\vspace{3mm}
 
We describe the case of surfaces.
Let $f\colon X\to D$ be a holomorphic submersion with $X$ a smooth surface and $D$ a smooth (even open) Riemann surface. 
They are described in \cite[Ch V \S 4--6]{bhpv}. Let $g_1$ be the genus of the fibre, call $Y$ the isomorphism class of a fibre.

The easy case of ruled surfaces is described in \cite[p.\thinspace 189-192]{bhpv}, followed by the case of 
elliptic fibre bundles 
\cite[p.\thinspace 193--198]{bhpv}, with classification when $D \cong \PP^1$ and many results (even classifications of some subcases) for $D$ of genus $1$.

As for the case of higher genus fibre bundles, 
when $Y$ has genus $g\ge 2$, $\mathrm{Aut}(Y)$ is finite (indeed $\#\mathrm{Aut}(Y)\le 84(g-1)$ by Hurwitz automorphism theorem).
Every fibre bundle over the smooth curve $D$ with $Y$ as a fibre is given by a representation $\pi _1(D)\to \mathrm{Aut}(Y)$. There is a finite unramified covering $u\colon \tilde{D}\to D$ such that making the fibre product with $u$
we get a fibre bundle $\tilde{f}\colon \tilde{X}\to \tilde{D}$ with 
$\tilde{X} =Y\times \tilde{D}$ and $\tilde{f}$ the projection onto the second factor \cite[pp.\thinspace 199--200]{bhpv}.

\subsection{Smooth semistable fibrations}
 In this section we consider certain fibrations whose general fibre is a smooth curve of genus $g\ge 2$ following 
 \cite[\S III.10]{bhpv}. We allow singular fibres, but only with ordinary double points.
\begin{definition}\label{a001}
A {\bf semistable fibration} $f\colon X\to D$ is a fibration with connected fibres, (i.e. $f_\ast(\Oo _X)=\Oo_D$) with $X$ a smooth and connected complex surface (even not compact) $D$ a Riemann surface (not necessarily compact or algebraic), a general fibre of $f$ is a smooth curve of genus $g\ge 2$, 
 all fibres have at most ordinary double points as singularity and no fibre contains a $(-1)$-curve. 

Take $f\colon X\to D$ semistable as in Definition \ref{a001}; $X$ may have $(-1)$-curves, say  $J$ (not contained in a fibre by assumption), but in this case $D$ is compact, $D$ has genus $0$ and $f_{|J}: J\to D$ is a finite map. In all other cases $X$ is a minimal surface.
\end{definition}

The reason that Definition \ref{a001} requires that $X$ has no $(-1)$-curve contained in a fibre is that if it has one you contract it and get $f_1\colon X_1\to D$ with all the other properties and $X\to X_1$ a blowing up of one point.

\begin{remark} A statement analogous to Theorem \ref{a3} is no longer true when the base is allowed to have genus higher than 1. 
Indeed, the famous Kodaira surfaces, which we discuss in the following section, are counterexamples to such a statement. 
\end{remark}

\subsection{Fibrations with general fibre hyperelliptic}

Let $f\colon X\to D$ be a fibration whose fibres are smooth hyperelliptic curves. In this case there is a monograph \cite{xiao1} proving certain ranges of Chern numbers are allowed/not allowed for $X$.

There are similar results for fibrations $f\colon X\to D$ whose general fibre is a smooth hyperelliptic curve; the most important modern result was proved by Gujar, Paul and Purnaprajna in \cite{gpp}, showing that $f$ has at most multiple fibres of multiplicity $2$ and that if $g$ is even no multiple fibre at all 
\cite[Thm.\thinspace 2]{gpp}.

\begin{remark}\label{hyperd}
If $f\colon X\to D$ is a hyperelliptic fibration, then there is a rational map $\Phi\colon X\dasharrow P$, 
where $u\colon P \to D$ is a $\PP^1$-bundle on $D$ and $v\circ \Phi =f$, hence the diagram 

$$\xymatrix{
X \ar[d]_f \ar@{-->}[r]^{\phi} & P \ar[dl]^u \\
D 
}$$
commutes, but $\Phi$ is only a rational map generically $2$ to $1$, which on a smooth fibre $F$ of $f$ is the degree $2$ morphism coming from the definition of hyperelliptic curve.
\end{remark}

 An observation for the case of smooth compact complex surfaces; if the genus of the fibre satisfies 
 $g_1\ge 2$ we may drop the projectivity of $X$, because it is a consequence of the existence of the fibration in curves of genus at least $2$ 
\cite[Ch.VI\thinspace \S5]{bhpv}.
The relative canonical sheaf of $f$ (usually denoted with $\omega _{X/D}$) is the line bundle 
$\omega _{X/D}\ce \omega _X\otimes f^{\ast}(\omega _D)$. Often you see $K_X$ instead of $\omega _X$ and $K_D$ instead of $\omega _D$; they are the same line bundles, just a different notation.
The sheaf $E\ce f_{\ast}(\omega _{X/D})$ is a rank $g$ vector bundle on $D$. It has the following type of nonnegativity.

\begin{theorem}
\cite[Thm.\thinspace 1.1]{xiao1} Every vector bundle $F$ on $D$ such that there is a surjection $E\to F$ has nonnegative degree.
\end{theorem}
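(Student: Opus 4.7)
The plan is to reformulate the statement as saying that $E \ce f_*(\omega_{X/D})$ is a nef vector bundle on $D$, and then to invoke Fujita--Kawamata semipositivity. On a smooth projective curve, a vector bundle $V$ is nef if and only if every line bundle quotient of $V$ has nonnegative degree. Moreover, for any rank-$r$ vector bundle quotient $V \twoheadrightarrow F$, the induced surjection $\wedge^r V \twoheadrightarrow \det F$ combined with the nef-ness of $\wedge^r V$ (inherited from $V$) forces $\deg F = \deg \det F \ge 0$. Hence the content of the theorem is precisely the nef-ness of $E$.

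To establish nef-ness I would invoke Fujita's theorem: for any fibration $f \colon X \to D$ from a smooth projective surface onto a smooth projective curve, $f_*(\omega_{X/D})$ is nef. The standard proof is Hodge-theoretic: over the smooth locus $D^\circ \subset D$ of $f$, the local system $R^1 f_* \CC$ underlies a polarized variation of Hodge structures whose Hodge filtration piece is canonically identified with $E|_{D^\circ}$; the Hodge metric has Griffiths-semipositive curvature; and Deligne's canonical extension across the discriminant of $f$ produces a nef algebraic extension that agrees with $E$. In the hyperelliptic setting at hand one may alternatively exploit the generically $2 : 1$ rational map $\Phi \colon X \dashrightarrow P$ of Remark \ref{hyperd}: the associated hyperelliptic involution splits $E$ into invariant and anti-invariant summands, each of which may be computed in terms of the double-cover data on the $\PP^1$-bundle $P \to D$ and checked for nef-ness by intersection theory on the ruled surface $P$.

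The hard part is the nef-ness itself. The formal reductions---to line-bundle quotients via exterior powers, and to the single inequality $\deg L \ge 0$ for each line-bundle quotient $E \twoheadrightarrow L$---are essentially bookkeeping. The nontrivial content lies in actually proving that inequality, which requires either the Hodge-theoretic positivity of the Gauss--Manin connection or, in the hyperelliptic case, geometric positivity derived from the specific double-cover structure.
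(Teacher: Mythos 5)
The paper offers no proof of this statement---it is quoted verbatim from Xiao's monograph, where it is Fujita's semipositivity theorem---and your sketch is precisely the standard argument for that result: reduce nonnegativity of degrees of quotient bundles to nef-ness via exterior powers and determinants, then obtain nef-ness of $f_\ast(\omega_{X/D})$ from the Griffiths-semipositive curvature of the Hodge metric on the bottom Hodge piece of $R^1f_\ast\CC$ over the smooth locus together with the canonical extension across the discriminant. Your main route therefore agrees with the cited source; the only caution is that the alternative hyperelliptic route you mention risks circularity, since in Xiao's construction the $\PP^1$-bundle $P$ is $\PP(E)$ itself, so intersection theory on $P$ already presupposes knowledge of the numerical invariants of $E$.
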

Let $g_2$ the genus of $D$. 
A. Beauville 
proved the second inequality in \cite[p.\thinspace 345]{bea}, while the first inequality in the following lemma is obvious.

\begin{lemma}
We have $g_2 \le q \le g_1+g_2$. Moreover, $q=g_1+g_2$ if and only if $X$ is birational to $D\times C$ with $C$ a curve of genus $g_1$.
\end{lemma}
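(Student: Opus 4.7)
The bound $g_2\le q$ is immediate. Since the fibres of $f$ are connected, $f_\ast\Oo_X=\Oo_D$, and the edge map of the Leray spectral sequence gives an injection $f^\ast\colon H^1(D,\Oo_D)\hookrightarrow H^1(X,\Oo_X)$. The upper bound $q\le g_1+g_2$ comes from the same spectral sequence, which degenerates at $E_2$ since $H^2(D,-)=0$ on a curve; the associated short exact sequence
\begin{equation*}
0\to H^1(D,\Oo_D)\to H^1(X,\Oo_X)\to H^0(D,R^1f_\ast\Oo_X)\to 0
\end{equation*}
rewrites as $q=g_2+h^0(D,R^1f_\ast\Oo_X)$. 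Relative Serre duality for the curve fibration $f$ identifies $R^1f_\ast\Oo_X\cong E^\vee$, where $E=f_\ast\omega_{X/D}$, so by the theorem of Xiao quoted just above (every quotient of $E$ has nonnegative degree) every subbundle of $R^1f_\ast\Oo_X$ has nonpositive degree. I would then apply this to the saturation of the image $\mathcal G\subseteq R^1f_\ast\Oo_X$ of the evaluation map $H^0(R^1f_\ast\Oo_X)\otimes\Oo_D\to R^1f_\ast\Oo_X$: on the one hand $\mathcal G$ is a quotient of a trivial bundle, hence of nonnegative degree; on the other hand, its saturation is a subbundle of $R^1f_\ast\Oo_X$ and therefore of nonpositive degree. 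Consequently $\mathcal G$ has degree $0$, and a globally generated degree $0$ vector bundle on the smooth curve $D$ is trivial (pick $r$ sections forming a basis at a general point and observe that the resulting map from $\Oo_D^r$ has torsion cokernel of length $0$), so $\mathcal G\cong\Oo_D^r$ with $r\le g_1$. Comparing global sections gives $h^0(R^1f_\ast\Oo_X)=r\le g_1$, hence $q\le g_1+g_2$.

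For the ``moreover'' statement, if $X$ is birational to $D\times C$ then $q(X)=q(D\times C)=g_2+g_1$ by K\"unneth and the birational invariance of the irregularity on smooth projective surfaces. Conversely, assume $q=g_1+g_2$. Tracing back the argument above, $\mathcal G$ must have full rank $g_1$ in $R^1f_\ast\Oo_X$, so the saturation $\mathcal G^{\mathrm{sat}}$ coincides with the whole sheaf, and the degree comparison forces $\mathcal G=\mathcal G^{\mathrm{sat}}$. Hence $R^1f_\ast\Oo_X\cong\Oo_D^{g_1}$ and dually $E\cong\Oo_D^{g_1}$. Since $R^1f_\ast\Oo_X$ is the Lie algebra of the relative Jacobian $\mathrm{Jac}(X/D)\to D$, this abelian scheme is isotrivial; there exists an abelian variety $A$ of dimension $g_1$ with $\mathrm{Jac}(X/D)\cong D\times A$, so that every smooth fibre of $f$ has Jacobian isomorphic to $A$.

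To pass from isotriviality of the relative Jacobian to genuine birational triviality of $X$ itself, the plan is to follow the Albanese argument of Beauville in \cite[p.\thinspace 345]{bea}. The fibration $f$ induces a surjection $\mathrm{Alb}(X)\twoheadrightarrow J(D)$, and under $q=g_1+g_2$ the identity component $K$ of its kernel is an abelian variety of dimension $g_1$. For a general smooth fibre $F_s$, the Albanese morphism $\alpha_X$ sends $F_s$ (up to translation) into $K$ and factors as an isogeny $J(F_s)\to K$ with $K\sim A$; one then fixes a smooth curve $C$ of genus $g_1$ with $J(C)\cong K$ and forms the combined morphism $(\alpha_X,f)\colon X\to K\times D$, which maps birationally onto a surface birationally equivalent to $C\times D$. \textbf{The main obstacle} lies precisely in this last step: converting the isotriviality of the relative Jacobian, together with the Albanese data, into an honest birational equivalence $X\dashrightarrow D\times C$ rather than only fibrewise isomorphisms. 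This is the heart of Beauville's argument, which I would cite rather than reprove.
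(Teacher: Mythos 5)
Your proposal is correct in substance, but it is worth saying that it does considerably more than the paper, which offers no proof at all: the text simply attributes the upper bound (and the characterisation of equality) to Beauville \cite[p.\thinspace 345]{bea} and calls the lower bound obvious. Your derivation of $q\le g_1+g_2$ via the Leray sequence, relative duality $R^1f_\ast\Oo_X\cong E^\vee$, the semipositivity of quotients of $E$ quoted just above the lemma, and the observation that a globally generated degree-$0$ bundle on a curve is trivial, is a valid and pleasantly self-contained argument (one small point worth a sentence: when multiple fibres are allowed you should note that $R^1f_\ast\Oo_X$ is locally free of rank $g_1$, which follows from $R^1f_\ast\omega_{X/D}\cong\Oo_D$ and duality, so that ``saturation'' and ``rank $\le g_1$'' make sense). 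The easy direction of the equivalence is also fine. The one step I would push back on is the assertion that $R^1f_\ast\Oo_X\cong\Oo_D^{g_1}$ makes the relative Jacobian a constant abelian scheme $D\times A$: triviality of the Hodge bundle does not formally trivialise the family of Jacobians --- that implication is an Arakelov-type rigidity statement essentially equivalent to the isotriviality you are trying to establish, and moreover $\mathrm{Jac}(X/D)$ is only defined away from the singular fibres. Since you end up citing Beauville for the passage to an actual birational equivalence $X\dashrightarrow D\times C$ anyway (which is exactly what the paper does), the cleaner course is to drop the intermediate claim and invoke Beauville's Albanese argument directly for the whole converse implication; with that adjustment the proposal stands.
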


Fibrations by smooth hyperelliptic fibres are not very varied, more  precisely by \cite[Proposition 2.10]{xiao1}:

\begin{figure}[h]
\includegraphics[scale=0.18]{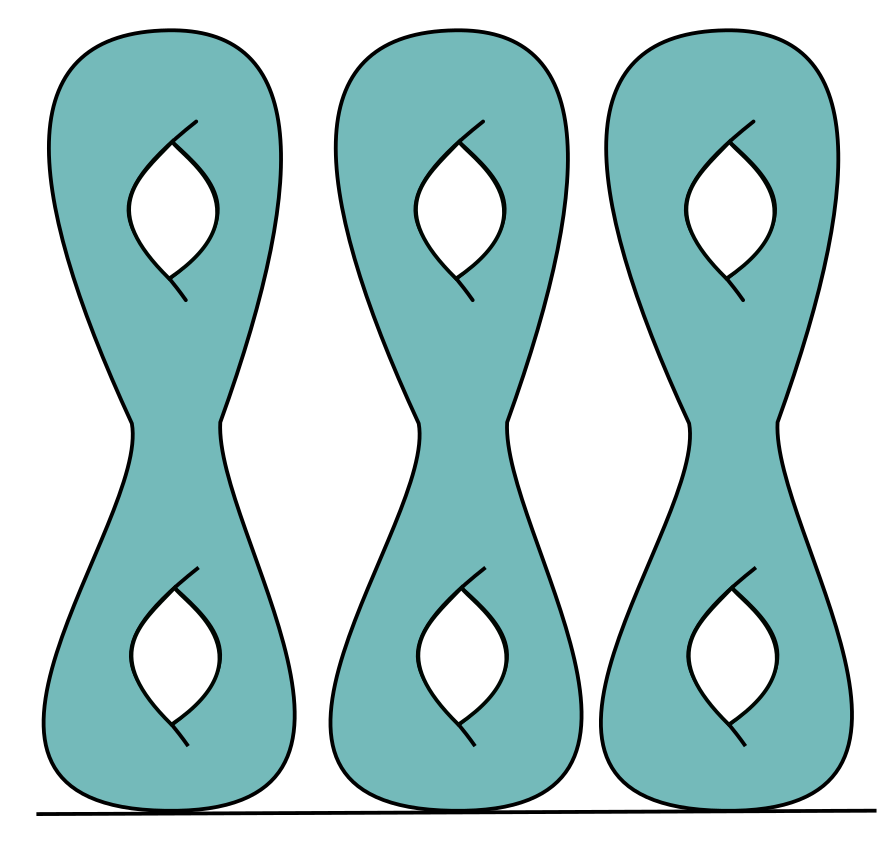}
\caption{Smooth hyperelliptic fibration}
\label{smooth}
\end{figure}

\begin{proposition}
Let $D$ be a smooth projective curve, $S$ a smooth compact surface and $f\colon S\to D$ be a morphism such that all its fibres are smooth hyperelliptic curves of genus at least $ 2$. Then $f$ is isotrivial.
\end{proposition}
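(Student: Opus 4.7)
The plan is to reduce the statement to a claim about $(2g+2)$-point configurations on $\mathbb{P}^1$. Since each fibre $F_d := f^{-1}(d)$ is smooth of genus $g \geq 2$, it carries a \emph{unique} hyperelliptic involution $\iota_d$, characterized as the unique nontrivial automorphism whose quotient is $\mathbb{P}^1$. By this uniqueness combined with the smoothness of $f$, the assignment $d \mapsto \iota_d$ defines a holomorphic section of the relative automorphism group scheme $\mathrm{Aut}(S/D) \to D$, yielding a global involution $\sigma \colon S \to S$ commuting with $f$. Its fixed locus $R \subset S$ is smooth, and the restriction $R \to D$ is étale of degree $2g+2$.

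Next I would form the quotient $\pi \colon S \to P := S/\langle\sigma\rangle$. The standard local model $(x,y) \mapsto (x,-y)$ at a point of $R$ shows that $P$ is smooth; since each fibre of the induced map $p \colon P \to D$ is $\mathbb{P}^1$, the map $p$ is a $\mathbb{P}^1$-bundle. The branch image $B := \pi(R) \subset P$ is a smooth divisor with $B \to D$ étale of degree $2g+2$, and $\pi$ is a double cover branched exactly along $B$. Hence $S$ is recovered from the datum $(P,B)$, so isotriviality of $f$ is equivalent to isotriviality of the family of pairs $(P_d, B_d)$, which parametrizes $(2g+2)$-tuples of distinct points on $\mathbb{P}^1$ up to $\mathrm{PGL}_2$.

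I would then perform a finite étale base change $D' \to D$ (taking a Galois closure of the cover $B \to D$) so that $B_{D'} := B \times_D D'$ splits as a disjoint union of $2g+2$ sections $s_1, \ldots, s_{2g+2}$ of $p_{D'}$. Three pairwise disjoint sections trivialize any $\mathbb{P}^1$-bundle over a curve: writing $P_{D'} \cong \mathbb{P}(\mathcal{E})$, the disjointness of $s_1, s_2$ forces a splitting $\mathcal{E} \cong L_1 \oplus L_2$, and the existence of a third disjoint section $s_3$ forces $L_1 \cong L_2$, giving $P_{D'} \cong \mathbb{P}^1 \times D'$ with $s_1, s_2, s_3$ becoming the constant sections $0, 1, \infty$.

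Under this trivialization, each remaining section $s_i$ with $i \geq 4$ becomes a morphism $D' \to \mathbb{P}^1 \setminus \{0, 1, \infty\}$. Since $D'$ is projective and $\mathbb{P}^1 \setminus \{0, 1, \infty\}$ is affine, every such morphism is constant. Hence all fibres of $(P_{D'}, B_{D'}) \to D'$ are isomorphic, so all fibres of $f_{D'}$ are isomorphic, and isotriviality descends along the finite étale base change $D' \to D$. The main potential obstacle is the global existence of $\sigma$, which rests entirely on the uniqueness of the hyperelliptic involution for $g \geq 2$; this uniqueness fails for $g = 1$, explaining why the genus hypothesis is indispensable.
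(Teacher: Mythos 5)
Your proof is correct, and it is worth noting that the paper does not actually prove this proposition: it simply quotes it from Xiao's monograph (\cite[Prop.\ 2.10]{xiao1}), so there is no in-text argument to compare against. Your route is the standard self-contained one, and every step checks out: the fibrewise hyperelliptic involutions glue to a global $\sigma$; the quotient $P=S/\langle\sigma\rangle$ is a smooth $\PP^1$-fibration over $D$ with an \'etale degree-$(2g+2)$ branch divisor $B$; after a finite \'etale base change $B$ splits into $2g+2$ pairwise disjoint sections, three of which rigidify the bundle as $\PP^1\times D'$ with marked points $0,1,\infty$; and the remaining sections land in the hyperbolic curve $\PP^1\setminus\{0,1,\infty\}$, hence are constant, which pins down the branch configuration and therefore the fibre up to isomorphism. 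The one step you should flag as needing a genuine input is the existence of the global involution: pointwise uniqueness of $\iota_d$ does not by itself produce a holomorphic section of $\mathrm{Aut}(S/D)\to D$; you need that for a smooth family of curves of genus $\ge 2$ this relative automorphism space is finite and unramified over $D$ (Deligne--Mumford), so that the locus of hyperelliptic involutions, mapping bijectively to $D$, is an isomorphism onto $D$. Alternatively one can bypass $\mathrm{Aut}(S/D)$ entirely via the relative canonical map $S\to\PP(f_*\omega_{S/D})$, which on each fibre is the degree-$2$ hyperelliptic quotient followed by a Veronese, exhibiting $S$ directly as a double cover of a $\PP^1$-bundle; this is also closer in spirit to how Xiao organizes the structure theory of hyperelliptic fibrations. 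What your argument buys over the paper's bare citation is an elementary, essentially topological explanation of the rigidity: isotriviality reduces to the fact that a projective curve admits no non-constant map to a thrice-punctured line.
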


Recall that isotrivial means that the smooth fibres are all isomorphic, as in Figure \ref{smooth}. For more details on isotrivially fibred surfaces see \cite{sa,se}.

\subsection{Kodaira fibrations }\label{kod-surf}
Following \cite[Ch.\thinspace V\S 14]{bhpv} we discuss  Kodaira fibrations as examples of families 
of curves 
that behave very differently from  families of hyperelliptic curves. 
\begin{definition}
A  {\bf Kodaira fibration}
 is a smooth compact complex surface $X$ such that there is a submersion $f\colon X\to D$ with $D$ a smooth compact curve, 
{\em all fibres of $f$ are smooth of genus $g$, but  they do not form a locally trivial fibre bundle in the holomorphic category} (of course it is a differentially locally trivial fibre bundle). 
\end{definition}

\begin{figure}[h]
\includegraphics[scale=0.18]{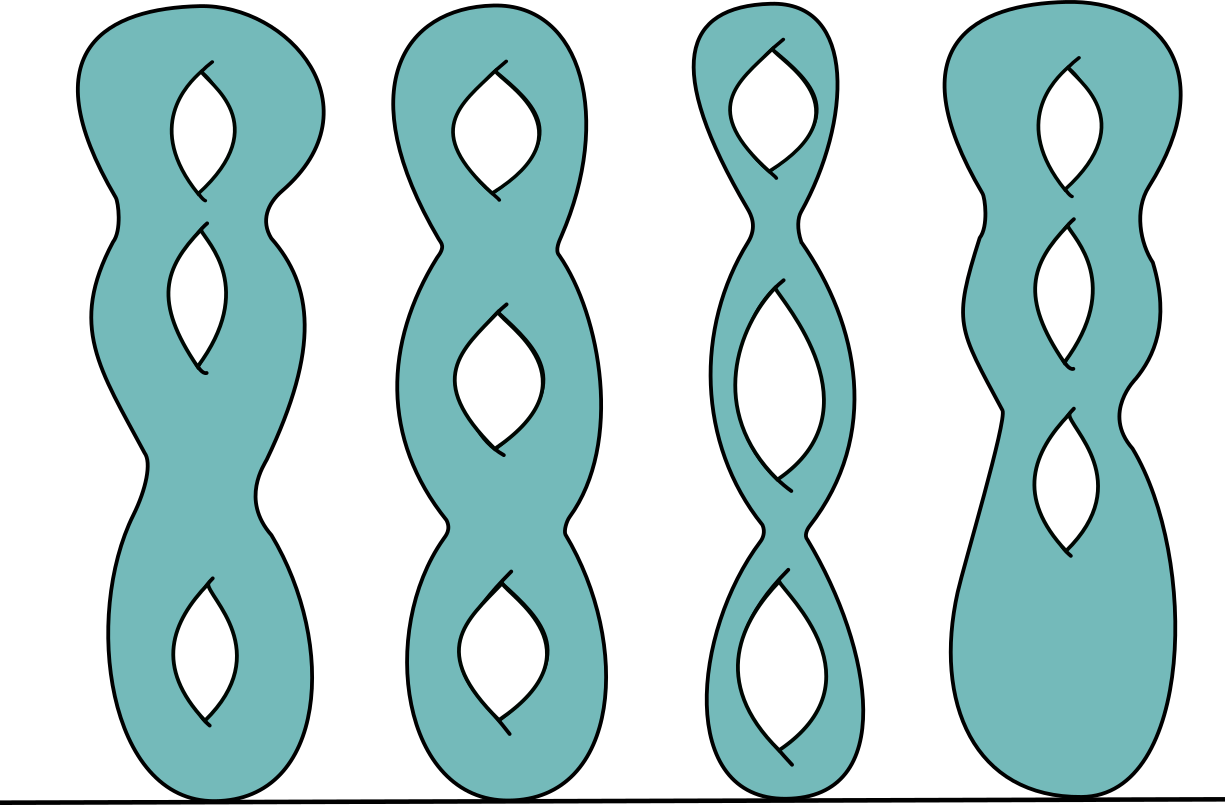}
\caption{Kodaira fibration}
\label{kodfib}
\end{figure} 

\begin{remark}
In view of the Grauert--Fischer Theorem \ref{a2} this means that, though all fibres are smooth curves, their complex structure varies.
It follows immediately from the uniqueness of $\mathbb P^1$ as a curve of genus $0$ and the existence of the 
$J$-fibration \cite[Sec.\thinspace 9]{bhpv} that the fibre genus of a Kodaira fibration is at least $2$. 
Such an inequality  also holds for the base genus. Hence, for a Kodaira surfaces we have $g_1\geq 2$ and $g_2\geq 2$, see
\cite[p.\thinspace 220]{bhpv}. 
\end{remark}

\begin{remark}\label{nuovo}
There does  not exist any Kodaira surface  whose fibres are smooth curves of genus $2$  \cite[Prop.\thinspace 2.10]{xiao1}. 
In further  generality, if  $f\colon X\to D$ is any submersion whose fibres are smooth hyperelliptic curves, then  all fibres 
of $f$ are isomorphic
\cite[Prop.\thinspace 2.10]{xiao1}. Since by Theorem \ref{a2} not being  a fibre bundle is equivalent to not all  fibres being isomorphic,
it follows that fibres of a Kodaira fibration are not hyperelliptic curves.
\end{remark}

 Nowadays, the preferred approach to Kodaira surfaces  uses moduli spaces and  moduli stacks for both curves and  surfaces. 
 The reader may find this approach at its best in \cite{cat} by F. Catanese, in  references therein, and 
 certainly also in  papers quoting \cite{cat}. We summarise a few very interesting facts.
 Let $\Mm_g$, $g\ge 2$, be the coarse moduli scheme of genus $g$ curves. Any genus $g$ Kodaira fibration 
 $f\colon X\to D$ induces a non-constant morphism $w\colon D\to \Mm_g$. Thus, the known fact that there is no genus $2$ Kodaira fibration follows from the fact that $\Mm_2$ is affine, while the existence of genus $g\geq 3$ Kodaira fibrations follows from the 
 well known result that for genus  $g\ge 3$ the  coarse moduli scheme 
 $\Mm_g$ contains projective curves. Furthermore, if  $g\ge 3$ then for a general $[C]\in \Mm_g$ there 
 exists a projective curve $T\subset \Mm_g$ such that $[C]\in T$, see \cite[Thm.\thinspace 2.33]{hi}. 
 Consequently,
  for every $g\ge 3$ there exists a Kodaira fibration containing a general genus $g$ curve.

\begin{lemma}\label{imp}Fix an integer $g\ge 3$. In any genus $g$ Kodaira fibration  each fibre
is isomorphic to only finitely many fibres.
\end{lemma}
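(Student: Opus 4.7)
The plan is to exploit the moduli-theoretic morphism $w\colon D\to \Mm_g$ attached to a Kodaira fibration, which is mentioned just before the statement, and turn the finiteness of its fibres into the finiteness of isomorphism classes among the $f^{-1}(d)$.

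First I would make precise the morphism $w$. Since $f\colon X\to D$ is a smooth proper family of curves of genus $g\ge 3$, the coarse moduli property of $\Mm_g$ yields a morphism $w\colon D\to \Mm_g$ (of complex analytic spaces, which is actually algebraic because $D$ is a smooth projective curve) sending $d$ to the isomorphism class $[f^{-1}(d)]$. By construction, two fibres $f^{-1}(d)$ and $f^{-1}(d')$ are isomorphic as abstract curves if and only if $w(d)=w(d')$. Thus the statement is equivalent to the claim that $w$ has finite fibres.

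Next I would argue that $w$ is non-constant. If $w$ were constant, then all fibres of $f$ would be pairwise isomorphic, and by the Grauert--Fischer Theorem~\ref{a2} the fibration $f$ would be locally trivial in the Euclidean topology, contradicting the defining property of a Kodaira fibration. Hence $w$ is non-constant, and since $D$ is irreducible of dimension $1$, its image $w(D)$ is an irreducible subvariety of $\Mm_g$ of dimension exactly $1$.

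Finally, I would conclude by a standard dimension/properness argument. The curve $D$ is projective and $\Mm_g$ is separated, so $w$ is proper; hence $w\colon D\to w(D)$ is a proper, dominant, non-constant morphism between irreducible $1$-dimensional varieties. Such a morphism is quasi-finite (no fibre can have dimension $1$, since that would force the fibre to equal all of $D$), and a proper quasi-finite morphism is finite. In particular, for every point $[C]\in \Mm_g$ the preimage $w^{-1}([C])$ is a finite subset of $D$, which is exactly the statement that each fibre of $f$ is isomorphic to only finitely many fibres.

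The only potentially delicate point, and the one I would take most care with, is the algebraicity of $w$ and the use of the coarse moduli property: one needs that the analytic family $f$ induces a bona fide morphism of schemes $D\to \Mm_g$ so that one may speak of its image as a subvariety and apply properness. Once that is granted, the rest of the argument is a short topological/dimension-theoretic consequence of $D$ being a compact irreducible complex curve.
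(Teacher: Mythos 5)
Your proposal is correct and follows essentially the same route as the paper: both pass to the induced morphism $w\colon D\to \Mm_g$ given by the coarse moduli property, note that $w$ is non-constant because a Kodaira fibration is not isotrivial (via Grauert--Fischer), and conclude that the fibres of $w$ are finite since $D$ is an irreducible curve. Your version merely spells out the properness/quasi-finiteness step a little more explicitly than the paper does.
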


\begin{proof} 
Let $\mathcal M_g$ be the coarse  moduli scheme of genus $g$ curves, which is a quasi-projective variety.
 The morphism $f\colon X\to D$ is a flat family of genus $g$ curves. By the universal property of  coarse moduli spaces
the family $f$ induces a morphism $w\colon D\to \mathcal M_g$. If the fibres $F_1$ and $F_2$ are isomorphic as abstract curves, then $w(F_1)=w(F_2)$. Since not all fibres of $f$ are isomorphic, $\dim w(D)=1$. Since  $w$ is an algebraic map, 
if a fibre has dimension 0, its reduction is a finite set. Hence each fibre of $w$ is finite. 
Thus each fibre of $f$ is isomorphic to only finitely many fibres of $f$.
\end{proof}

All Kodaira fibrations are projective surfaces \cite[p.\thinspace 220]{bhpv}, and for any Kodaira fibration there are bounds
$$\boxed{2 <\frac{c_1^2(X)}{c_2(X)} < 3}.$$

The following paragraph is  shamelessly copied  from   \cite{cp}: The number $\nu = \frac{c_1^2(X)}{c_2(X)}$ is an  important invariant of Kodaira fibred surfaces, called  the slope, that can be seen as a quantitative measure of the non-multiplicativity
of the signature. In fact, every product Kodaira surface  satisfies $\nu = 2$; on the other hand, if $S$ is a Kodaira fibred surface, then Arakelov inequality (see \cite{bea}) implies $\nu (S) > 2$, while Liu inequality (see \cite{liu}) yields $\nu(S) < 3$, so that for such a surface the slope lies in the open interval $(2, 3)$. The original examples by Atiyah, Hirzebruch and Kodaira have slope lying in $(2, 2 + 1/3]$, 
(see \cite[p.\thinspace221]{bhpv}), and the first examples with higher slope were given by  Catanese and  Rollenske in \cite{CR} using double Kodaira that  satisfy 
$\nu(S) = 2 + 2/3$. More examples of double Kodaira fibrations where given by Causin and Polizzi in \cite{cp} and further explored by Polizzi in \cite{p}.   At present it is  unknown whether the slope of a Kodaira fibred surface can be arbitrarily close to $3$.

 J. Jost and S.-T. Yau proved that every deformation of a Kodaira fibration is a Kodaira fibration, see  \cite[p.\thinspace 223]{bhpv} and \cite{jy}.\\


\section{\bf Acknowledgements} 
 We greatly appreciate  thorough and comprehensive referee report
which contributed  to improve the quality of our text.
We are grateful to Francesco Polizzi for pointing out 
necessary corrections to text. 
We thank Maria Pilar Garcia del Moral Zabala and Camilo las Heras for asking us to write
this note about existence and numerical invariants of hyperelliptic fibrations. 
Ballico is a member of MUR and GNSAGA of INdAM (Italy).
Gasparim and Suzuki thank the University of Trento for the support and 
excellent hospitality during their visit under the research in pairs program of CIRM.
Suzuki was supported by Grant 2021/11750-7 S\~ao Paulo Research Foundation - FAPESP.
Gasparim is a senior associate the Abdus Salam International Centre for Theoretical Physics, Italy.\\

\vspace{12pt}

\noindent Edoardo Ballico, \texttt{edoardo.ballico@unitn.it}\\
Department of Mathematics, University of Trento\\
Trento, Italy

\vspace{12pt}

\noindent Elizabeth Gasparim, \texttt{etgasparim@gmail.com}\\
Departamento de Matem\'aticas, Universidade Cat\'olica del Norte\\
Antofagasta, Chile

\vspace{12pt}

\noindent Bruno Suzuki, \texttt{obrunosuzuki@gmail.com} \\
Instituto de Matem\'atica e Estat\'istica, Universidade de S\~ao Paulo\\
S\~ao Paulo, Brazil

\end{document}